\DeclareMathAlphabet{\mathbbb}{U}{bbold}{m}{n}%for blackboard numbers and lowercase blackboard letters
\newtheorem{theorem}{Theorem}[section]
\newtheorem{lemma}[theorem]{Lemma}
\newtheorem{proposition}[theorem]{Proposition}
\theoremstyle{definition}
\newtheorem{definition}[theorem]{Definition}
\newtheorem{example}[theorem]{Example}
\newtheorem{remark}[theorem]{Remark}
\newtheorem{claim}[theorem]{Claim}
\newenvironment{proofclaim}{\paragraph{\emph{Proof of the Claim}.}}{\hfill$\qed$\\}
\newcounter{mycount}
\newcommand{\myref}[1]{\hyperref[#1]{#1}}
\newcommand{\sfour}{\mathsf{S4}}
\newcommand{\ipc}{\mathsf{IPC}}
\newcommand{\M}{\mathsf{M}}
\newcommand{\K}{\mathbf{K}}
\newcommand{\R}{\mathscr{R}}
\renewcommand{\H}{\mathfrak{H}}
\renewcommand{\L}{\mathsf{L}}
\newcommand{\F}{\mathfrak{F}}
\newcommand{\G}{\mathfrak{G}}
\newcommand{\mipc}{\mathsf{MIPC}}
\newcommand{\mgrz}{\mathsf{MGrz}}
\newcommand{\mpipc}{\mathsf{M^+IPC}}
\newcommand{\mpgrz}{\mathsf{M^+Grz}}
\newcommand{\msfour}{\mathsf{MS4}}
\newcommand{\mpsfour}{\mathsf{M^+S4}}
\newcommand{\sfive}{\mathsf{S5}}
\newcommand{\ms}{\mathsf{MS4}}
\newcommand{\cpc}{\mathsf{CPC}}
\renewcommand{\L}{\mathsf{L}}
\newcommand{\Lae}{\mathcal{L_{\forall\exists}}}
\newcommand{\Lba}{\mathcal{L_{\Box\forall}}}
\newcommand{\iqc}{\mathsf{IQC}}
\newcommand{\sk}{\rho \hspace{0.08em}}
\newcommand{\msfrm}{{\mathbf{DF}_{\ms}}}
\newcommand{\mipcfrm}{{\mathbf{DF}_{\mipc}}}
\newcommand{\Grz}{\mathsf{Grz}}
\newcommand{\grz}{\mathsf{grz}}
\newcommand{\DF}{\mathbf{DF}}
\newcommand{\Rrel}{\mathrel{R}}
\newcommand{\Qrel}{\mathrel{Q}}
\newcommand{\Erel}{\mathrel{E}}
\newcommand{\EQrel}{\mathrel{E_Q}}
\newcommand{\Fin}{\mathbf{Fin}}
\newcommand{\mipcfin}{{\Fin_\mipc}}
\newcommand{\mgrzfin}{{\Fin_\mgrz}}
\newcommand{\mpipcfin}{{\Fin_\mpipc}}
\newcommand{\mpgrzfin}{{\Fin_\mpgrz}}
\newcommand\clusterone[2]{% pt, radius
  \path[draw,red] let \p1=(#1)
    in \pgfextra{
    \pgfmathsetmacro{\radius}{#2*0.3}
  }
  (\p1) circle(\radius cm);
}
\newcommand\clustertwo[4]{% focus pt1, focus pt2, cste, rescale y-axis
  \path[draw,red] let \p1=(#1), \p2=(#2), \p3=($(\p1)!.5!(\p2)$)
  in \pgfextra{
    \pgfmathsetmacro{\angle}{atan2(\y2-\y1,\x2-\x1)}
    \pgfmathsetmacro{\focal}{veclen(\x2-\x1,\y2-\y1)/2/1cm}
    \pgfmathsetmacro{\lentotcm}{\focal*2*#3}
    \pgfmathsetmacro{\axeone}{(\lentotcm - 2 * \focal)/2+\focal}
    \pgfmathsetmacro{\axetwo}{sqrt((\lentotcm/2)*(\lentotcm/2)-\focal*\focal}
    \pgfmathsetmacro{\newaxetwo}{\axetwo*0.5*#4}
  }
  (\p3) ellipse[x radius=\axeone cm,y radius=\newaxetwo cm, rotate=\angle];
}
\edef\plabelformat{(\string#2\string#1\string#3)}
\edef\plabelrangeformat{(\string#3\string#1,\string#2\string#6)}
\newcommand{\plabel}[1]{\label{#1}
\immediate\write\@auxout{\noexpand\crefformat{#1}{\noexpand\cref{#1}\plabelformat}
\noexpand\crefmultiformat{#1}{\noexpand\cref{#1}\plabelformat}{,\plabelformat}{,\plabelformat}{,\plabelformat}
\noexpand\crefrangeformat{#1}{\noexpand\cref{#1}\plabelrangeformat}}}
\tikzstyle{none}=[inner sep=0pt]
\tikzstyle{black dot}=[fill=black, draw=black, shape=circle, inner sep=0, minimum size=3.5pt]
\tikzstyle{to}=[->]
\tikzstyle{mapsto}=[{|->}]
\tikzstyle{none dashed}=[-, dashed]
\tikzstyle{dashed to}=[->, dashed]
\tikzstyle{dashed mapsto}=[{|->}, dashed]
\tikzstyle{Latex arrow}=[{-{Latex[width=1mm]}}]
\tikzstyle{dashed Latex arrow}=[{-{Latex[width=1mm]}}, dashed]
\begin{document}

\title{Esakia's theorem for the amended monadic intuitionistic calculus}

\author{G.~Bezhanishvili}
\address{New Mexico State University}
\email{guram@nmsu.edu}

\author{L.~Carai}
\address{University of Milan}
\email{luca.carai.uni@gmail.com}

\subjclass[2020]{03B20, 03B45, 03B55}
\keywords{Intuitionistic logic, modal logic, G\"odel translation, modal companion, monadic logic}

\begin{abstract}
We show that the amended monadic Grzegorczyk logic $\mpgrz$ is the largest modal companion of the amended monadic intuitionistic logic $\mpipc$. Thus, unlike the monadic intuitionistic logic $\mipc$, Esakia's theorem does extend to $\mpipc$.
\end{abstract}

\maketitle
\tableofcontents

\section{Introduction}

It is a classic result that the Grzegorczyk logic $\Grz$ is the largest modal companion of the intuitionistic propositional calculus $\ipc$ (see \cite{Esa79}). In \cite{Nau91} it was claimed that Esakia's theorem does not extend to the predicate setting. While the proof contains a gap, it is indeed the case that the monadic intuitionistic calculus $\mipc$ has no largest modal companion (see \cite{BC24a}). 
Our aim is to show that Esakia's theorem does hold for the amended calculus $\mpipc$. The latter is obtained by postulating the monadic version of Casari's axiom 
\[
\forall x[(p(x) \to \forall x p(x)) \to \forall x p(x)] \to \forall x p(x),
\]
and we prove that $\mpgrz$ is the largest modal companion of $\mpipc$, where $\mpgrz$ is the amendment of the monadic Grzegorczyk logic $\mgrz$ with the G\"odel translation of the monadic Casari axiom.

We briefly describe the methodology of proving Esakia's theorem for $\ipc$, why it fails for $\mipc$, and why things improve for $\mpipc$. 
Associating with each descriptive $\sfour$-frame its skeleton defines a functor $\rho$ from the category of descriptive $\sfour$-frames 
to the category of descriptive $\ipc$-frames (Esakia spaces).
This functor has a right adjoint $\sigma$ and the two functors yield an equivalence between the categories of finite $\ipc$-frames (finite posets and p-morphisms) and finite $\Grz$-frames. 
Together with the finite model property (fmp for short) of $\ipc$, 
this gives that $\Grz$ is a modal companion of $\ipc$. To see that it is the largest such, let $\M$ be a modal companion of $\ipc$. Since $\Grz$ also has the fmp, 
it is enough to observe that each finite $\Grz$-frame $\F$ is an $\M$-frame. 
Consider $\rho\F$. 
Because $\M$ is a modal companion of $\ipc$, there is a descriptive $\M$-frame $\G$ such that $\rho\F\cong\rho\G$, so $\sigma\rho\F\cong\sigma\rho\G$. 
Since $\sigma\rho\G$ is a p-morphic image of $\G$ and $\F\cong\sigma\rho\F\cong\sigma\rho\G$, we conclude that $\F$ is a p-morphic image of 
$\G$. 
Thus, $\F$ is an $\M$-frame.

Things don't go so smoothly for $\mipc$. On the positive side, 
both $\mipc$ and $\mgrz$ do have the fmp (although the proofs are considerably more complicated; see \cite[Sec.~10.3]{GKWZ03} and \cite[Sec.~7]{BK24}), and they share the same finite frames. However, 
 the trouble is that 
$\sigma$ is no longer well defined. Nevertheless, it is well defined on finite $\mipc$-frames and $\sigma\rho\F\cong\F$ for each finite $\mgrz$-frame $\F$ (although $\rho$ and $\sigma$ no longer establish an equivalence between the two categories of finite $\mgrz$-frames and finite $\mipc$-frames 
since the notion of p-morphism differs for $\mgrz$ and $\mipc$; see \cref{sec: mipc,sec: ms4}).
From a characterization of modal companions of monadic intuitionistic logics (see \cite[Thm.~5.12(2)]{BC24a}), there is a p-morphism from $\rho\G$ onto $\rho\F$, but this no longer implies that there is a p-morphism from $\G$ onto $\F$ (in spite of the fact that $\F\cong\sigma\rho\F$). This is at the heart of the failure of Esakia's theorem for $\mipc$ (see \cite{BC24b}). 

The situation improves for $\mpipc$. Indeed, p-morphisms between finite $\mpipc$-frames and finite $\mpgrz$-frames turn out to coincide 
(and hence $\rho$ and $\sigma$ do yield an equivalence between the categories of finite $\mpgrz$-frames and finite $\mpipc$-frames). 
Moreover, both $\mpipc$ and $\mpgrz$ have the fmp (see \cite{BBI23}). Our key observation is that if $\G$ is a descriptive $\M$-frame and $\F$ is a finite $\mgrz$-frame, each p-morphism from $\rho\G$ onto $\rho\F$ lifts to a p-morphism from $\G$ onto $\F\cong\sigma\rho\F$, thus yielding Esakia's theorem for $\mpipc$. 

\section{Monadic intuitionistic logics}\label{sec: mipc}

In this section we briefly recall monadic intuitionistic logics and their descriptive frame semantics. 
Let $\mathcal L$ be the propositional language of $\ipc$, and let $\Lae$ be its extension by two ``quantifier modalities" 
$\forall$ and $\exists$. 

\begin{definition} \label{def: mipc}
    \cite[p.~38]{Pri57}
    The \textit{monadic intuitionistic propositional calculus} $\mipc$ is the smallest set of formulas in the language
$\Lae$ containing
\begin{itemize}
\item all theorems of $\ipc$;
\item the $\sfour$-axioms for  $\forall$: $\forall(p\land q)\leftrightarrow(\forall p\land\forall q)$, $\forall p \rightarrow p$, $\forall p \rightarrow \forall \forall p$; 
\item the $\sfive$-axioms for $\exists$: $\exists(p\vee q)\leftrightarrow(\exists p\vee\exists q)$, $p \rightarrow \exists p$, $\exists \exists p \rightarrow \exists p$, $(\exists p \land \exists q) \rightarrow \exists (\exists p \land q)$;
\item the connecting axioms: 
 $\exists\forall p\leftrightarrow\forall p$,  $\exists p \leftrightarrow \forall\exists p$
\end{itemize}
and closed under the rules of modus ponens, substitution, and $\forall$-necessitation $(\varphi / \forall \varphi )$.
\end{definition}

It is well know that $\mipc$ axiomatizes the monadic fragment of $\iqc$. Indeed, following \cite[Sec.~3]{Ono87}, fix an individual variable $x$, associate with each propositional letter $p$ the monadic predicate $p^*(x)$, and set 
\begin{itemize}
    \item $p^* = p^*(x)$;
    \item $(\neg \varphi)^* = \neg \varphi^*$;
    \item $(\varphi \circ \psi)^* = \varphi^* \circ \psi^*$ where $\circ =  \wedge,\vee,\to$;
    \item $(\forall \varphi)^* = \forall x \varphi^*$ and $(\exists \varphi)^* = \exists x \varphi^*$.
\end{itemize}
Then we have the following result of Bull \cite{Bul66} (see also \cite{OS88}).

\begin{theorem}
    $\mipc\vdash\varphi$ iff $\iqc\vdash\varphi^*$.
\end{theorem}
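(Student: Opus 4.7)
The theorem has two directions, and I would prove them by quite different routes. For the soundness direction, $\mipc\vdash\varphi \Rightarrow \iqc\vdash\varphi^*$, I proceed by induction on the length of an $\mipc$-proof, verifying that the translation of each axiom is a theorem of $\iqc$ and that each rule is preserved. The $\ipc$-axioms translate to themselves since $(-)^*$ commutes with the propositional connectives and $\ipc \subseteq \iqc$. The $\sfour$-axioms for $\forall$ become standard $\iqc$-theorems about $\forall x$ applied to formulas with only $x$ free; the transitivity axiom $\forall p \to \forall\forall p$ translates to $\forall x p^*(x) \to \forall x \forall x p^*(x)$, which is trivial because the inner quantifier binds no free occurrence. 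The $\sfive$-axioms for $\exists$ are analogous: since $\exists x \chi^*$ is closed, $\exists x \exists x \chi^* \leftrightarrow \exists x \chi^*$ and the distributivity axiom both become obvious in $\iqc$. The two connecting axioms translate to $\exists x \forall x p^*(x) \leftrightarrow \forall x p^*(x)$ and $\exists x p^*(x) \leftrightarrow \forall x \exists x p^*(x)$, which are valid because quantifying over a closed formula has no effect. Modus ponens is immediate; substitution of a formula $\psi$ for a propositional variable $p$ corresponds under $(-)^*$ to substituting the monadic formula $\psi^*$ for the predicate $p^*$; and $\forall$-necessitation corresponds to $\forall x$-generalization, which is admissible in $\iqc$.

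For the completeness direction, $\iqc\vdash\varphi^* \Rightarrow \mipc\vdash\varphi$, I argue semantically, using Kripke completeness of both calculi. Assuming $\mipc\not\vdash\varphi$, fix a refuting $\mipc$-Kripke model: a frame $(W,\leq,E)$ with $E$ an equivalence relation satisfying the standard commutativity condition with $\leq$, a valuation $v$ of the propositional variables of $\varphi$, and a world $w_0\in W$ at which $\varphi$ fails. From this data I build an $\iqc$-model on the poset $(W,\leq)$ with (essentially) constant domain $D = W/E$, interpreting the monadic predicate $p^*$ at a world $w$ applied to a class $d\in D$ in the natural way that tracks forcing of $p$ at a representative of $d$ reachable from $w$ by $\leq$ followed by $E$. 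Monotonicity of the resulting $\iqc$-interpretation follows from the compatibility between $\leq$ and $E$ that is built into the definition of an $\mipc$-frame. The core of the argument is then a truth-preservation lemma, proved by induction on $\psi$: for every $u\in W$, the world $u$ forces $\psi$ in the $\mipc$-model if and only if $u$ forces $\psi^*$ under the assignment $x\mapsto [u]_E$ in the $\iqc$-model. Applied to $u=w_0$ with the assignment $x\mapsto [w_0]_E$, this yields a refutation of $\varphi^*$, contradicting $\iqc\vdash\varphi^*$.

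The main obstacle is the quantifier step of that induction. The $\mipc$-clauses for $\forall\psi$ and $\exists\psi$ sweep the image of $u$ under the combined relation built from $\leq$ and $E$, while the $\iqc$-clauses for $\forall x$ and $\exists x$ sweep all $\leq$-successors and all elements of the domain. To force these two readings to coincide one must use precisely the back-and-forth commutation of $\leq$ with $E$ that characterizes $\mipc$-frames, together with a carefully chosen interpretation of $p^*$ so that ``elements reached from a future world via $E$'' matches ``elements of the constant domain $W/E$'' on the $\iqc$-side. Getting these clauses to line up, and simultaneously ensuring that the interpretation of $p^*$ is a genuine monotone predicate so that the constructed structure really is an $\iqc$-Kripke model, constitutes the real technical content of the backward direction and is where the bulk of the work lies.
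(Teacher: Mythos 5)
The paper does not prove this statement; it is quoted as Bull's theorem with references to \cite{Bul66} and \cite{OS88}, so there is no in-paper argument to compare against. Your forward direction (induction on $\mipc$-derivations, using that $\forall x\varphi^*$ and $\exists x\varphi^*$ have no free occurrence of $x$, and that $\iqc$ is closed under predicate substitution and generalization) is routine and correct.

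The backward direction, however, has a genuine gap, and it sits exactly at the place you yourself flag as ``the real technical content.'' Your proposed $\iqc$-model --- the poset $(W,\leq)$ with constant domain $W/E$ and $p^*$ interpreted at $w$ on a class $d$ via ``a representative of $d$ reachable from $w$ by $\leq$ followed by $E$'' --- cannot support the truth lemma. First, the interpretation is ill-defined: two $E_Q$-equivalent points need not agree on a propositional letter (in the paper's example frame $\F_1$, $a$ and $b$ are $E_{Q_1}$-equivalent, yet a valuation may make $p$ true at $b$ and false at $a$), and a class $d$ may have no representative $Q$-reachable from $w$ at all, since $Q$ is merely a quasi-order. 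Second, the quantifier clauses do not line up: with a constant domain, $w\vDash\forall x\,\psi^*$ quantifies over \emph{all} of $W/E$ at all $\leq$-successors of $w$, whereas $w\vDash\forall\psi$ in the $\mipc$-model quantifies only over $Q[w]$; no choice of interpretation of $p^*$ repairs this mismatch. The structurally correct target is a model whose worlds are the $E_Q$-classes ordered by the induced $Q$ and whose domains are the classes themselves, but then one needs a \emph{coherent functional} system of domain embeddings along $Q$, while condition \eqref{def:ono:item6} of \cref{def:ono} only guarantees, pointwise and non-uniquely, that each element of a class has some $R$-successor in each $Q$-later class. Producing such a coherent system is precisely the hard part: Bull's original argument was flawed on this point, and the known proofs (Fischer Servi, Ono--Suzuki) route through the finite model property of $\mipc$ or through functional monadic Heyting algebras to make these selections. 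As written, your argument asserts that the clauses ``can be forced to coincide'' without exhibiting the construction that does so, so the completeness direction is not established.
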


\begin{definition}
    A {\em monadic intuitionistic logic} is a set of formulas of $\Lae$ containing $\mipc$ and closed under the rules of inference in \cref{def: mipc}.
\end{definition}

 Each monadic intuitionistic logic is complete with respect to its descriptive frame semantics \cite{Bez99}, which we recall next. 
As usual, for a binary relation $R$ on a set $X$ and $U \subseteq X$, we write $R[U]$ for the $R$-image and $R^{-1}[U]$ for the $R$-inverse image of $U$. 
When $U= \{x\}$, we simply write $R[x]$ and $R^{-1}[x]$. We call $U$
an {\em $R$-upset} if $R[U]\subseteq U$ and an {\em $R$-downset} if $R^{-1}[U] \subseteq U$.
If $R$ is a quasi-order (reflexive and transitive relation), we denote by $E_R$ the equivalence relation given by 
\begin{equation}\label{eq: ER}\tag{$\ast$}
x \mathrel{E_R} y \ \Longleftrightarrow \ x \Rrel y \; \& \; y \Rrel x.
\end{equation}

We recall that a {\em Stone space} is a topological space $X$ that is compact, Hausdorff, and zero-dimensional. We call a binary relation $R$ on $X$ \emph{continuous} if $R[x]$ is closed for each $x \in X$ and $R^{-1}[U]$ is clopen for each clopen $U \subseteq X$.
The next definition goes back to \cite[Sec.~4]{Bez99} (see also \cite[Def.~2.7]{BC24a}). 

\begin{definition}\plabel{def:ono}
A \textit{descriptive $\mipc$-frame} is a tuple $\mathfrak F=(X,R,Q)$ such that
\begin{enumerate}
\item\label[def:ono]{def:ono:item1} $X$ is a Stone space,
\item\label[def:ono]{def:ono:item2} $R$ is a continuous partial order,
\item\label[def:ono]{def:ono:item3} $Q$ is a continuous quasi-order,
\item\label[def:ono]{def:ono:item4} 
$U$ a clopen $R$-upset $\Longrightarrow$ $Q[U]$ is a clopen $R$-upset,
\item\label[def:ono]{def:ono:item5} $R \subseteq Q$,
\item\label[def:ono]{def:ono:item6} $x \Qrel y \Longrightarrow \exists z \in X : x \Rrel z \; \& \; z \EQrel y$.
\end{enumerate}
\begin{figure}[!ht]
\begin{center}
\begin{tikzpicture}
	\begin{pgfonlayer}{nodelayer}
		\node [style=black dot] (1) at (0, 0) {};
		\node [style=black dot] (2) at (0, 2) {};
		\node [style=black dot] (3) at (2, 2) {};
		\node [style=none] (4) at (1, 2.25) {$E_Q$};
		\node [style=none] (5) at (-0.27, 1) {$R$};
		\node [style=none] (6) at (1.4, 0.96) {$Q$};
		\node [style=none] (7) at (0, -0.27) {$x$};
		\node [style=none] (8) at (-0.2, 2.27) {$\exists z$};
		\node [style=none] (9) at (2.25, 2.25) {$y$};
	\end{pgfonlayer}
	\begin{pgfonlayer}{edgelayer}
		\draw [style=dashed Latex arrow] (1) to (2);
		\draw [style=Latex arrow] (1) to (3);
		\draw [style=none dashed] (2) to (3);
	\end{pgfonlayer}
\end{tikzpicture}
\end{center}
\end{figure}
\end{definition}

Observe that if a descriptive $\mipc$-frame is finite, then the topology is discrete. More generally, forgetting the topology results in the standard Kripke semantics for $\mipc$ (see, e.g., \cite[Sec.~3]{Ono77}).

We next recall how to interpret the formulas of $\Lae$ in a descriptive $\mipc$-frame $\F=(X,R,Q)$. 
A \textit{valuation} on $\F$ is a map $v$ associating a clopen $R$-upset to each propositional letter. The interpretation of intuitionistic connectives $\wedge,\vee,\to,\neg$ in $\F$ is standard (see, e.g., \cite[pp.~236--237]{CZ97}). To see how $\forall$ and $\exists$ are interpreted, let $x \in X$. Then, for each formula $\varphi$ of $\Lae$,
\begin{align*}
x \vDash_v \forall \varphi & \iff (\forall y \in X) (x \Qrel y \Longrightarrow  y \vDash_v \varphi);\\
x \vDash_v \exists \varphi & \iff (\exists y \in X) (x \EQrel y \; \& \; y \vDash_v \varphi) \\
& \iff (\exists y \in X) (y \Qrel x \; \& \; y \vDash_v \varphi).
\end{align*}

\begin{theorem}
\cite[Thm.~14]{Bez99} Each monadic intuitionistic logic is complete with respect to its class of descriptive $\mipc$-frames.
\end{theorem}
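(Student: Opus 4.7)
The plan is to proceed via the algebraic semantics of monadic Heyting algebras and an Esakia-style duality. A \emph{monadic Heyting algebra} is a Heyting algebra $\mathfrak{A}=(A,\wedge,\vee,\to,0,1)$ equipped with unary operations $\forall,\exists$ satisfying the algebraic counterparts of the axioms in \cref{def: mipc}: $\forall$ preserves finite meets, $\forall a\le a$, $\forall\forall a = \forall a$; $\exists$ preserves finite joins, $a\le\exists a$, $\exists\exists a=\exists a$, $\exists a\wedge\exists b\le\exists(\exists a\wedge b)$; and $\exists\forall a=\forall a$, $\forall\exists a=\exists a$. For a monadic intuitionistic logic $L$, the Lindenbaum-Tarski construction---quotienting the formula algebra of $\Lae$ by $L$-provable equivalence---yields a monadic Heyting algebra $\mathfrak{A}_L$ with $[\varphi]=\top$ iff $L\vdash\varphi$. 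If $L\not\vdash\varphi$, the canonical valuation $p\mapsto[p]$ on $\mathfrak{A}_L$ refutes $\varphi$, reducing completeness to representing $\mathfrak{A}_L$ as the algebra of clopen $R$-upsets of a descriptive $\mipc$-frame.

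Dualization extends Esakia duality to the monadic operators. Let $X$ be the set of prime filters of $\mathfrak{A}_L$, topologized as a Stone space by the subbasis $\{\hat a,X\setminus\hat a:a\in A\}$ with $\hat a=\{x\in X:a\in x\}$. Set
\[
x \Rrel y \iff x\subseteq y, \qquad x \Qrel y \iff \bigl(\forall a\in x \Longrightarrow a\in y\text{ for all }a\in A\bigr).
\]
Esakia duality for Heyting algebras already delivers \cref{def:ono:item1,def:ono:item2}, continuity of $R$, and that $a\mapsto\hat a$ is an isomorphism onto the algebra of clopen $R$-upsets; \cref{def:ono:item5} is immediate from $\forall a\le a$. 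The technical engine is the pair of identities
\[
\widehat{\forall a}=\{x\in X:Q[x]\subseteq\hat a\}, \qquad \widehat{\exists a}=\{x\in X:E_Q[x]\cap\hat a\ne\emptyset\},
\]
each direction of which is proved by a prime-filter-extension argument adapted from Esakia duality. From these, continuity of $Q$ (\cref{def:ono:item3}) and \cref{def:ono:item4} follow: for a clopen $R$-upset $\hat a$, one has $Q[\hat a]=\widehat{\exists a}$ (using $R\subseteq Q$), which is clopen and itself an $R$-upset.

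With $(X,R,Q)$ in hand, an induction on formulas of $\Lae$ shows that the valuation $v(p):=\widehat{[p]}$ satisfies $\{x\in X:x\vDash_v \varphi\}=\widehat{[\varphi]}$---the quantifier cases being exactly the two displayed identities---so any $\varphi$ refuted in $\mathfrak{A}_L$ is refuted at some world of the dual frame, which yields completeness once the frame is verified to be descriptive.

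The main obstacle is \cref{def:ono:item6}: given $x\Qrel y$, produce $z\in X$ with $x\Rrel z$ and $z\EQrel y$. The plan is to consider the filter $F$ generated by $x\cup\{b:\forall b\in y\}$ (which is itself a filter by $\forall(b\wedge b')=\forall b\wedge\forall b'$) together with the ideal $I$ generated by $\{\forall c:c\notin y\}$, to show $F\cap I=\emptyset$, and then to extract $z$ by a Zorn-style prime filter extension so that $z\supseteq F$ and $z\cap I=\emptyset$. The reverse inclusions $x\subseteq z$, $\forall^{-1}[z]\subseteq y$, and $\forall^{-1}[y]\subseteq z$ are then immediate from the construction. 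Disjointness $F\cap I=\emptyset$ is precisely where the connecting axioms $\exists\forall p\leftrightarrow\forall p$ and $\forall\exists p\leftrightarrow\exists p$ are used: algebraically they force every $\forall$-image to be $\exists$-fixed, so applying $\exists$ to an alleged inequality $a\wedge b\le\bigvee_j\forall c_j$ (with $a\in x$, $\forall b\in y$, each $c_j\notin y$) and using $\exists\forall c_j=\forall c_j$ collapses it to an inequality that, together with $x\Qrel y$, delivers some $c_j\in y$, a contradiction. This algebraic use of the connecting axioms is the delicate step of the proof.
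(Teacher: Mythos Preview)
The paper does not prove this theorem at all: it is stated with the citation \cite[Thm.~14]{Bez99} and no argument is given. There is therefore no proof in the present paper to compare your attempt against.

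That said, your plan is exactly the approach of \cite{Bez99}: algebraic completeness via the Lindenbaum--Tarski monadic Heyting algebra, followed by the extension of Esakia duality to the monadic signature. The outline is sound, and your treatment of the delicate step---condition~(6) of \cref{def:ono}---is correct in spirit. Two places would need sharpening in a full proof. First, the disjointness of $F$ and $I$ works cleanly once you note that from $a\wedge b\le\bigvee_j\forall c_j$ with $\forall b\in y$ you may replace $b$ by $\forall b\in A_0$; then $\exists(a\wedge b)=\exists a\wedge b$, and since $a\le\exists a=\forall\exists a\in x$, the hypothesis $x\Qrel y$ gives $\exists a\in y$, whence $\bigvee_j\forall c_j\in y$ and primeness finishes. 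Second, your two displayed identities do not by themselves give full continuity of $Q$ (that $Q^{-1}[U]$ is clopen for \emph{every} clopen $U$, not just clopen $R$-upsets); one also needs the computation $Q^{-1}[\hat a\setminus\hat b]=X\setminus\widehat{\forall(a\to b)}$, after which the general case follows since every clopen in a Priestley space is a finite union of such differences.
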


\section{Monadic extensions of \texorpdfstring{$\msfour$}{MS4}}\label{sec: ms4}

In this section we briefly recall monadic extensions of $\msfour$ and their descriptive frame semantics.
Let $\mathcal{L}_{\Box \forall}$ be a propositional modal language with two modalities $\Box$ and $\forall$. 

\begin{definition}\label{def:ms4}
The \emph{monadic $\sfour$}, denoted $\ms$, is the smallest set of formulas of 
$\mathcal{L}_{\Box \forall}$ containing all theorems of the classical propositional calculus $\cpc$, the $\sfour$-axioms for $\Box$, the $\sfive$-axioms for $\forall$, the left commutativity axiom
\[
\Box \forall p \to \forall \Box p,
\]
and closed under the rules of modus ponens, substitution, $\Box$-necessitation, and $\forall$-necessi\-tation.
\end{definition}

As with $\mipc$, we have that $\msfour$ is the monadic fragment of predicate $\sfour$ (see \cite[Thm.~8]{FS77} and \cite[Thm.~5.8]{BK24}).

\begin{definition}
A \emph{monadic extension of $\ms$} is a set of formulas of $\mathcal{L}_{\Box \forall}$ containing $\ms$ and closed under the rules of inference in \cref{def:ms4}.    
\end{definition}

As with monadic intuitionistic logics, each monadic extension of $\msfour$ is complete with respect to its descriptive frame semantics. 

\begin{definition}\plabel{def:descriptive ms-frame}
\cite[Def.~3.7]{BC24a} A \emph{descriptive $\ms$-frame} is a tuple $\mathfrak G=(Y,R,E)$ such that
\begin{enumerate}
\item\label[def:descriptive ms-frame]{def:descriptive ms-frame:item1} $Y$ is a Stone space,
\item\label[def:descriptive ms-frame]{def:descriptive ms-frame:item2} $R$ is a continuous quasi-order,
\item\label[def:descriptive ms-frame]{def:descriptive ms-frame:item3} $E$ is a continuous equivalence relation,
\item\label[def:descriptive ms-frame]{def:descriptive ms-frame:item4}
$x \Erel y \; \& \; y \Rrel z \Longrightarrow \exists u \in Y : x \Rrel u \; \& \; u \Erel z$.
\end{enumerate}
\begin{figure}[!ht]
\begin{center}
\begin{tikzpicture}
\node at (-0.2,-0.25) {$x$};
\node at (-0.25,2.27) {$\exists u$};
\node at (2.2,-0.25) {$y$};
\node at (2.2,2.25) {$z$};
\fill (0,0) circle(2pt);
\fill (0,2) circle(2pt);
\fill (2,0) circle(2pt);
\fill (2,2) circle(2pt);
\draw [dashed, -{Latex[width=1mm]}] (0,0) -- (0,2);
\draw [-{Latex[width=1mm]}] (2,0) -- (2,2);
\draw [dashed] (0,2) -- (2,2);
\draw (0,0) -- (2,0);
\node [above] at (1,2) {$E$};
\node [above] at (1,0) {$E$};
\node [left] at (0,1) {$R$};
\node [right] at (2,1) {$R$};
\end{tikzpicture}
\end{center}
\end{figure}
\end{definition}

As with descriptive $\mipc$-frames, if a descriptive $\ms$-frame is finite, then the topology is discrete. More generally, forgetting the topology results in the standard Kripke semantics for $\ms$ (see, e.g., \cite[Sec.~3]{BC23a}).

We conclude this section by recalling how to interpret the formulas of $\Lba$ in a descriptive $\msfour$-frame $\G=(Y,R,E)$.
A \textit{valuation} on $\G$ is a map $v$ associating a clopen subset of $Y$ to each propositional letter. The interpretation of classical propositional connectives in $\G$ is standard. The modalities $\Box$ and $\forall$ are interpreted as follows, where $x \in Y$ and $\varphi$ is a formula of $\Lba$:
\begin{align*}
x \vDash_v \Box \varphi & \iff (\forall y \in Y) (x \Rrel y \Longrightarrow  y \vDash_v \varphi);\\
x \vDash_v \forall \varphi & \iff (\forall y \in Y) (x \Erel y \Longrightarrow  y \vDash_v \varphi).
\end{align*}

As with monadic intuitionistic logics, we have:

\begin{theorem}
\cite[Thm.~2.24]{BC24b} Each monadic extension of $\msfour$ is complete with respect to its class of descriptive $\msfour$-frames.
\end{theorem}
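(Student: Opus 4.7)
The plan is a standard Lindenbaum--Tarski plus J\'onsson--Tarski duality argument, in the spirit of the well-known completeness proof for mono-modal $\sfour$. First, I would introduce the variety of $\msfour$-algebras: Boolean algebras equipped with an $\sfour$-operator $\Box$ and an $\sfive$-operator $\forall$ satisfying the inequality $\Box\forall a \le \forall\Box a$, which algebraically expresses left commutativity. For any monadic extension $\L$ of $\msfour$, the Lindenbaum--Tarski algebra $A_\L$ of $\L$ is an $\L$-algebra, and $\L \vdash \varphi$ iff $\varphi$ evaluates to $\top$ in $A_\L$ under the canonical assignment. This yields algebraic completeness of $\L$ with respect to the subvariety of $\msfour$-algebras determined by $\L$.

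Next, to each $\msfour$-algebra $A$ I would associate its dual descriptive frame $\G_A = (X_A, R, E)$, where $X_A$ is the Stone space of ultrafilters of $A$ and
\[
x \Rrel y \iff \Box^{-1}(x) \subseteq y, \qquad x \Erel y \iff \forall^{-1}(x) = \forall^{-1}(y).
\]
Standard J\'onsson--Tarski computations, using the $\sfour$-axioms for $\Box$ and the $\sfive$-axioms for $\forall$, show that $R$ is a continuous quasi-order and $E$ is a continuous equivalence relation, verifying conditions (1)--(3) in the definition of a descriptive $\msfour$-frame. For the confluence condition (4), given $x \Erel y$ and $y \Rrel z$, I would produce the required $u$ by extending the filter generated by $\Box^{-1}(x) \cup \forall^{-1}(z)$ to an ultrafilter; the left commutativity axiom is precisely what guarantees this filter is proper.

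Finally, the Stone representation $a \mapsto \widehat{a} := \{x \in X_A : a \in x\}$ furnishes an isomorphism between $A$ and the algebra of clopen subsets of $\G_A$ equipped with the canonical interpretations of $\Box$ and $\forall$. Taking $A = A_\L$, it follows that $\G_{A_\L}$ is a descriptive $\L$-frame and that the valuation induced by the canonical assignment refutes any non-theorem $\varphi$ of $\L$ on $\G_{A_\L}$. Combined with algebraic completeness, this yields descriptive frame completeness of $\L$.

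The principal obstacle is the verification of confluence: showing that $\Box^{-1}(x) \cup \forall^{-1}(z)$ generates a proper filter whenever $x \Erel y$ and $y \Rrel z$ requires a careful argument combining the inequality $\Box\forall a \le \forall\Box a$ with the $\sfive$-properties of $\forall$, in order to rule out the existence of $a \in \Box^{-1}(x)$ and $b \in \forall^{-1}(z)$ with $a \wedge b = \bot$. Every other step is essentially routine adaptation of the classical Henkin-style canonical model construction for bi-modal logics.
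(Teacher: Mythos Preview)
The paper does not itself prove this theorem; it merely cites \cite[Thm.~2.24]{BC24b}. Your proposal is the standard route---algebraic completeness via the Lindenbaum--Tarski algebra followed by J\'onsson--Tarski duality---and it is correct; this is essentially how the cited reference proceeds. Your identification of the confluence condition as the only nontrivial step is accurate, and the strategy you sketch does work: if $\Box a\in x$, $\forall b\in z$, and $a\wedge b=0$, then from $\forall b\le b\le\neg a$ one gets $\Box\neg\forall b\in x$; the $\sfive$ identity $\neg\forall b=\forall\neg\forall b$ together with left commutativity $\Box\forall\le\forall\Box$ yields $\forall\Box\neg\forall b\in x$, whence $\Box\neg\forall b\in y$ (using $x\Erel y$) and then $\neg\forall b\in z$ (using $y\Rrel z$), contradicting $\forall b\in z$. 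One small point worth making explicit in your write-up: having $\forall^{-1}(z)\subseteq u$ only gives $u\Erel z$ in your sense (namely $\forall^{-1}(u)=\forall^{-1}(z)$) once you invoke the $\sfive$ axioms for $\forall$, so that step should be justified rather than left implicit.
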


\section{Modal companions}

In this section we connect modal intuitionistic logics to modal extensions of $\msfour$. We start by recalling that Fischer Servi \cite{FS77} (see also \cite{FS78a}) extended the G\"odel translation 
$(-)^t$ 
of $\ipc$ into $\sf S4$ to a translation of $\mipc$ into $\ms$ by adding the following two clauses for $\forall$ and $\exists$:
\begin{align*}
(\forall \varphi)^t = \Box\forall \varphi^t \quad \mbox{and} & \quad 
(\exists \varphi)^t = \exists \varphi^t. 
\end{align*}

\begin{theorem}\cite{FS77,FS78a}\label{thm: MS4 modal comp of MIPC}
$\mipc \vdash \varphi$ iff $\ms \vdash \varphi^t$ for each formula $\varphi$ of $\Lae$.
\end{theorem}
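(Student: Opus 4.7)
The plan is to prove the two directions separately: the forward direction by a syntactic induction on derivations, and the backward direction by converting a descriptive $\mipc$-countermodel into a descriptive $\ms$-countermodel of the translation. For the forward direction ($\mipc \vdash \varphi$ implies $\ms \vdash \varphi^t$), I would argue by induction on the length of a derivation of $\varphi$ in $\mipc$. The translations of the $\ipc$-axioms are $\sfour$-theorems by the classical G\"odel--Tarski result. The translations of the $\sfour$-axioms for $\forall$, e.g.\ $\Box\forall p \to \Box\forall\Box\forall p$, and of the $\sfive$-axioms for $\exists$ follow from the $\sfour$-axioms for $\Box$, the $\sfive$-axioms for $\forall$, and the left-commutativity axiom $\Box\forall p \to \forall\Box p$. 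The two connecting axioms translate to $\exists\Box\forall p \leftrightarrow \Box\forall p$ and $\exists p \leftrightarrow \Box\forall\exists p$, both derivable in $\ms$. Modus ponens and substitution commute with $(-)^t$, while $\forall$-necessitation $\varphi/\forall\varphi$ translates to $\varphi^t/\Box\forall\varphi^t$, which is the composition of $\Box$- and $\forall$-necessitation.

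For the backward direction I would reason contrapositively via the descriptive frame completeness stated just above. Suppose $\mipc \not\vdash \varphi$, and let $\F=(X,R,Q)$ and a valuation $v$ refute $\varphi$ at some $x_0 \in X$. Define $\G=(X,R,E_Q)$ on the same Stone space, keeping $R$ as the quasi-order and taking $E_Q$ (see \eqref{eq: ER}) as the equivalence relation; set $v'(p) := v(p)$ for each propositional letter, which is clopen in $X$. I would first check that $\G$ is a descriptive $\ms$-frame: \cref{def:descriptive ms-frame:item1,def:descriptive ms-frame:item2} are inherited from $\F$; continuity of $E_Q$ reduces to a Stone-space argument using $E_Q = Q \cap Q^{-1}$ and the continuity of $Q$; and the coherence condition \cref{def:descriptive ms-frame:item4} follows from \cref{def:ono:item5,def:ono:item6} of $\F$, since $x \EQrel y$ and $y \Rrel z$ give $x \Qrel z$ via $R \subseteq Q$ and transitivity, whereupon \cref{def:ono:item6} supplies $u$ with $x \Rrel u$ and $u \EQrel z$.

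The remainder is a routine induction showing that $x \vDash_v \psi$ in $\F$ iff $x \vDash_{v'} \psi^t$ in $\G$ for every subformula $\psi$ of $\varphi$ and every $x \in X$. The atomic, Boolean, and implication cases mirror the classical G\"odel--Tarski argument, using that each $v(p)$ is an $R$-upset. The $\exists$-case is immediate since $(\exists\psi)^t = \exists\psi^t$ and $\exists$ is interpreted via $E_Q$ on both sides. The critical case is $\forall\psi$, whose translation is $\Box\forall\psi^t$: for the $\Rightarrow$ direction, any $z$ with $x \Rrel z$ and $w$ with $z \EQrel w$ satisfies $x \Qrel w$ by \cref{def:ono:item5} and transitivity of $Q$, so the inductive hypothesis applies; for the $\Leftarrow$ direction, each $Q$-successor $y$ of $x$ is realized as an $(R;E_Q)$-successor via \cref{def:ono:item6}. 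The main obstacle, though modest, is the Stone-topological verification that $E_Q$ is continuous and the careful syntactic derivation of the translated connecting and $\sfour$-axioms for $\forall$ inside $\ms$.
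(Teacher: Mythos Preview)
The paper does not prove this theorem; it is cited from \cite{FS77,FS78a}. Your forward direction by induction on derivations is fine in outline. The backward direction, however, has a genuine gap. You assert that ``continuity of $E_Q$ reduces to a Stone-space argument using $E_Q = Q \cap Q^{-1}$ and the continuity of $Q$,'' but this is false in general: for a descriptive $\mipc$-frame $\F=(X,R,Q)$, the relation $E_Q$ need \emph{not} be continuous, so $(X,R,E_Q)$ need not be a descriptive $\ms$-frame. The paper itself says this explicitly (see the paragraph preceding the definition of $\sigma$ on finite frames, and the reference to \cite[Rem.~2.23]{BC24b}). Concretely, continuity of $Q$ gives that $Q^{-1}[U]$ is clopen for clopen $U$, and \cref{def:ono:item4} gives that $Q[U]$ is clopen for clopen $R$\nobreakdash-upsets $U$; neither yields that $E_Q[U]$ is clopen for an \emph{arbitrary} clopen $U$. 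This failure is precisely why the functor $\sigma$ does not extend from the propositional case to all of $\mipcfrm$, which in turn is at the heart of why Esakia's theorem fails for $\mipc$.

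The repair is straightforward: instead of descriptive-frame completeness, invoke the fmp of $\mipc$ (see, e.g., \cite[Sec.~10.3]{GKWZ03}). On a finite $\mipc$-frame the topology is discrete, continuity is automatic, and your construction $(X,R,E_Q)$ is then a legitimate finite $\ms$-frame (indeed an $\mgrz$-frame, by \cref{prop: sigma and rho finite:1}). The rest of your subformula induction goes through unchanged and yields $\ms \not\vdash \varphi^t$.
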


We next generalize the well-known notions of modal companion and intuitionistic fragment (see, e.g., \cite[Sec.~9.6]{CZ97}) to the monadic setting. 

\begin{definition} \cite[Def.~4.4]{BC24a} 
Let $\L$ be a monadic intuitionistic logic and $\M$ a monadic extension of $\ms$. We say that $\M$ is a \emph{modal companion} of $\L$ and that $\L$ is the \emph{intuitionistic fragment} of $\M$ provided
\[
\L\vdash\varphi \iff \M\vdash\varphi^t
\]
for every formula $\varphi$ of $\Lae$.
\end{definition}

To characterize modal companions of  intuitionistic modal logics, we need to recall the notion of morphism between descriptive $\mipc$-frames and between descriptive $\msfour$-frames. For this, we recall that a {\em p-morphism} between two Kripke frames $\mathfrak F_1 = (X_1,R_1)$ and $\mathfrak F_2 = (X_2,R_2)$ is a map $f \colon X_1 \to X_2$ satisfying $R_2[f(x)] = fR_1[x]$ for each $x \in X_1$. 

\begin{definition}\plabel{def:mipcfrm-morphisms}
\cite[Sec.~4]{Bez99} Let $\F_1=(X_1,R_1,Q_1)$ and $\F_2=(X_2,R_2,Q_2)$ be descriptive $\mipc$-frames. A map $f \colon X_1 \to X_2$ is a \emph{morphism of descriptive $\mipc$-frames} if 
\begin{enumerate}
\item\label[def:mipcfrm-morphisms]{def:mipcfrm-morphisms:item1} $f$ is continuous, 
\item\label[def:mipcfrm-morphisms]{def:mipcfrm-morphisms:item2} $f\colon (X_1,R_1)\to(X_2,R_2)$ is a p-morphism,
\item\label[def:mipcfrm-morphisms]{def:mipcfrm-morphisms:item3} $f\colon (X_1,Q_1)\to(X_2,Q_2)$ is a p-morphism,
\item\label[def:mipcfrm-morphisms]{def:mipcfrm-morphisms:item4} $Q_2^{-1}[f(x)]= R_2^{-1}fQ_1^{-1}[x]$ for each $x\in X_1$.
\end{enumerate}
Let $\mipcfrm$ denote the category of descriptive $\mipc$-frames and their morphisms. 
\end{definition}

\begin{remark}\label{rem:morph dmsfrm:item1}
The last condition of the above definition is equivalent to 
\[
E_{Q_2}[f(x)]= R_2^{-1}fE_{Q_1}[x] \mbox{ for each } x\in X_1
\]
(see \cite[Lem.~16]{Bez99}). However, it is important to emphasize that it is strictly weaker than saying that $f$ is a p-morphism with respect to $E_Q$
(see \cite[Ex.~5.16]{BC24a}).
\end{remark}

\begin{definition}\plabel{def:msfrm-morphisms}
\cite[Def.~3.8]{BC24a} Let $\G_1=(Y_1,R_1,E_1)$ and $\G_2=(Y_2,R_2,E_2)$ be descriptive $\ms$-frames. A map $f \colon Y_1 \to Y_2$ is a \emph{morphism of descriptive $\ms$-frames} if 
\begin{enumerate}
\item\label[def:msfrm-morphisms]{def:msfrm-morphisms:item1} $f$ is continuous, 
\item\label[def:msfrm-morphisms]{def:msfrm-morphisms:item2} $f\colon(Y_1,R_1)\to(Y_2,R_2)$ is a p-morphism,
\item\label[def:msfrm-morphisms]{def:msfrm-morphisms:item3} $f \colon (Y_1,E_1) \to (Y_2,E_2)$ is a p-morphism,
\end{enumerate}
Let $\msfrm$ denote the category of descriptive $\ms$-frames and their morphisms. 
\end{definition}

\begin{remark}
    We emphasize that the last condition of the above definition is stronger than the condition in \cref{rem:morph dmsfrm:item1}.
\end{remark}

We next connect descriptive $\ms$-frames with descriptive $\mipc$-frames. For this we recall the notion of the skeleton. 
Given a descriptive $\ms$-frame $\G=(Y,R,E)$ let $Q_E$ be the composite $E \circ R$; that is,
\begin{equation}\label{eq: QE}\tag{$\ast\ast$}
x \mathrel{Q_E} y \ \Longleftrightarrow \ \exists z \in Y : x \Rrel z \; \& \; z \Erel y.
\end{equation}

Since $R$ is reflexive, it is clear that $E \subseteq Q_E$ (but the converse is not always true). This will be used in \cref{lem:Casari and almost strong isotone implies isotone}.

\begin{definition}\ \plabel{def:skeleton}
\begin{enumerate}
    \item\label[def:skeleton]{def:skeleton:1} Define the {\em skeleton} of a descriptive $\ms$-frame $\G=(Y,R,E)$ to be the tuple $$\sk(\G)\coloneqq(X,R',Q')$$ where $X$
    is the quotient of $Y$ by the equivalence
relation $E_R$ on $Y$ induced by $R$ (see~\eqref{eq: ER}), $\pi \colon Y \to X$ is the quotient map,  
\[
\pi(x) \mathrel{R'} \pi(y) \iff x \Rrel y,
\]
and
\[
\pi(x) \mathrel{Q'} \pi(y) \iff x \mathrel{Q_E} y.
\]
\item\label[def:skeleton]{def:skeleton:2} If $\G_1=(Y_1,R_1,E_1)$, $\G_2=(Y_2,R_2,E_2)$, and $f \colon \G_1 \to \G_2$ is a $\msfrm$-morphism, we define $\sk(f) \colon \sk(\G_1) \to \sk(\G_2)$ by 
\[
\sk(f)(\pi_1(x))=\pi_2(f(x))
\]
for each $x \in Y_1$, where $\pi_1,\pi_2$ are the corresponding quotient maps.
\end{enumerate}
\end{definition}

\begin{lemma}\cite[Lem.~5.15]{BC24a}
$\sk \colon \msfrm \to \mipcfrm$ is a well-defined functor. \label{lem: skeleton}
\end{lemma}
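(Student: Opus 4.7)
The plan is to verify that (i) for each descriptive $\ms$-frame $\G=(Y,R,E)$, the triple $\sk(\G)=(X,R',Q')$ satisfies the six clauses of \cref{def:ono}; (ii) for each $\msfrm$-morphism $f$, the induced map $\sk(f)$ is well-defined and a $\mipcfrm$-morphism; and (iii) $\sk$ respects identities and composition. Part (iii) is immediate from the pointwise definition of $\sk(f)$, so the real work is in (i) and (ii).

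For (i), continuity of $R$ makes $E_R=R\cap R^{-1}$ a closed equivalence relation on $Y$. Every clopen $R$-upset $U$ is automatically $E_R$-saturated, since $E_R[x]\subseteq R[x]\subseteq U$ for $x\in U$, and continuity of $R$ supplies the Priestley-type separation of $R$-distinct points by clopen $R$-upsets; these together give that the quotient topology on $X$ is Stone. The induced $R'$ is then a continuous partial order. The main work is for $Q'$: its well-definedness on $E_R$-classes is where \cref{def:descriptive ms-frame:item4} enters. Given $x_1\,E_R\,x_2$, $y_1\,E_R\,y_2$, and a $z$ witnessing $x_1\mathrel{Q_E}y_1$ via $x_1\,R\,z\,E\,y_1$, I apply \cref{def:descriptive ms-frame:item4} to $z\,E\,y_1\,R\,y_2$ to obtain $u$ with $z\,R\,u\,E\,y_2$; then $x_2\,R\,z\,R\,u$ witnesses $x_2\mathrel{Q_E}y_2$. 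Reflexivity, transitivity, and continuity of $Q'$ follow from routine manipulation of the composite $Q_E=E\circ R$, using continuity of $R$ and $E$.

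The remaining clauses are short. Clause \cref{def:ono:item5} holds because reflexivity of $E$ gives $R\subseteq Q_E$, hence $R'\subseteq Q'$. Clause \cref{def:ono:item4} reduces to continuity of $Q_E$ on $Y$ together with the fact that clopen $R$-upsets of $Y$ descend to clopen $R'$-upsets of $X$. For \cref{def:ono:item6}, the witness $\pi(z)$ for $z$ as above works: $z\,E\,y$ together with reflexivity of $R$ yields both $z\mathrel{Q_E}y$ and $y\mathrel{Q_E}z$, so $\pi(z)\mathrel{E_{Q'}}\pi(y)$.

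For (ii), $\sk(f)$ is well-defined because the $R$-p-morphism property of $f$ forces $x\,E_{R_1}\,x'\Rightarrow f(x)\,E_{R_2}\,f(x')$, and continuous by the universal property of the quotient topology. The $R'$-p-morphism condition for $\sk(f)$ is immediate from the $R$-p-morphism property of $f$. For the $Q'$-p-morphism condition, one first checks that $f$ itself is a p-morphism with respect to $Q_E$: the forward direction uses preservation of $R$ and $E$, the backward direction uses the p-morphism property of $f$ successively for $R$ and then $E$. Clause \cref{def:mipcfrm-morphisms:item4} then follows because $Q_2'^{-1}[\sk(f)(\pi_1(x))]$ is automatically an $R_2'$-downset (by $R_2'\subseteq Q_2'$ and transitivity of $Q_2'$), so applying $R_2'^{-1}$ leaves it fixed. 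I expect the main obstacle to be the well-definedness of $Q'$ on $E_R$-classes, since this is the one place where the commuting-square axiom \cref{def:descriptive ms-frame:item4} is truly essential on the object side; without it, $Q'$ would not descend to the quotient.
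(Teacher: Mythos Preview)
The paper does not actually prove this lemma; it simply cites \cite[Lem.~5.15]{BC24a}. So there is no in-paper argument to compare against, and your sketch has to stand on its own.

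Most of your outline is sound, but your treatment of \cref{def:mipcfrm-morphisms:item4} has a genuine gap. Your argument is that $Q_2'^{-1}[\sk(f)(\pi_1(x))]$ is an $R_2'$-downset, so applying $R_2'^{-1}$ leaves it fixed. For this to finish the job you would need $\sk(f)[Q_1'^{-1}[\pi_1(x)]]$ to equal $Q_2'^{-1}[\sk(f)(\pi_1(x))]$, but the $Q'$-p-morphism condition you verified only gives $Q_2'[\sk(f)(a)]=\sk(f)[Q_1'[a]]$, not the inverse-relation version. In fact the inverse equality can fail: take $Y_1=\{a_1,a_2\}$ with identity $R_1$ and full $E_1$, $Y_2=\{b_1,b_2,b_3\}$ with $b_1\,R_2\,b_2$ (plus reflexivity), $E_2$-classes $\{b_1\}$ and $\{b_2,b_3\}$, and $f(a_i)=b_{i+1}$. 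Then $f$ is an $\msfrm$-morphism and a $Q_E$-p-morphism, yet $b_1\in Q_{E_2}^{-1}[f(a_1)]\setminus f[Q_{E_1}^{-1}[a_1]]$. What your downset observation actually delivers is only the inclusion $R_2'^{-1}\bigl[\sk(f)[Q_1'^{-1}[\pi_1(x)]]\bigr]\subseteq Q_2'^{-1}[\sk(f)(\pi_1(x))]$.

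For the missing inclusion you must use the $E$-back property of $f$ directly, not via $Q_E$. If $v=\pi_2(v_0)$ satisfies $v_0\,Q_{E_2}\,f(x)$, write $v_0\,R_2\,w\,E_2\,f(x)$; by $E$-back there is $z$ with $z\,E_1\,x$ and $f(z)=w$. Since $z\,E_1\,x$ gives $z\,Q_{E_1}\,x$, we have $\pi_1(z)\in Q_1'^{-1}[\pi_1(x)]$, and $v_0\,R_2\,f(z)$ yields $v\in R_2'^{-1}\bigl[\sk(f)[Q_1'^{-1}[\pi_1(x)]]\bigr]$. This is exactly the step where \cref{def:mipcfrm-morphisms:item4} is genuinely weaker than an $E_Q$-p-morphism condition (cf.\ \cref{rem:morph dmsfrm:item1}), and it does not reduce to the $Q'$-p-morphism property you already established.
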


For a monadic intuitionistic logic $\L$, let $\DF_\L$ be the full subcategory of $\DF_\mipc$ consisting of descriptive $\L$-frames; and for a monadic extension $\M$ of $\msfour$, define $\DF_\M$ similarly. 
Following \cite[p.~261]{CZ97}, we call an onto morphism $f \colon \F_1 \to \F_2$ a {\em reduction}. When there is a reduction from $\F_1$ to $\F_2$, we write $\F_1\twoheadrightarrow\F_2$. For a class $\K$ of descriptive $\mipc$ or $\msfour$-frames, let $$\R(\K) = \{ \F_2 \mid \F_1 \twoheadrightarrow \F_2 \mbox{ for some } \F_1 \in \K \}.$$ 
The next theorem characterizes all modal companions of a given monadic intuitionistic logic. 

\begin{theorem}{\cite[Thm.~5.12(2)]{BC24a}}\label{thm: auxiliary 2}
Let $\L$ be a monadic intuitionistic logic. A monadic extension $\M$ of $\msfour$ is a  modal companion of $\L$ iff 
$\DF_\L = \R (\sk[\DF_\M])$. 
\end{theorem}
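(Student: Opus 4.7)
The plan is to prove the biconditional by splitting into the two directions and leveraging completeness of monadic intuitionistic logics and of monadic extensions of $\msfour$ with respect to their descriptive frame semantics. The cornerstone is a \emph{translation--skeleton compatibility} lemma that I would prove first: for every descriptive $\msfour$-frame $\G = (Y,R,E)$ and every formula $\varphi$ of $\Lae$, one has $\G \vDash \varphi^t$ iff $\sk(\G) \vDash \varphi$. The induction is routine for propositional connectives; the $\forall$-clause uses $(\forall\varphi)^t = \Box\forall\varphi^t$ together with the definition $\Q = E \circ R$ of the quasi-order on the skeleton, and the $\exists$-clause uses $(\exists\varphi)^t = \exists\varphi^t$ together with $E \subseteq \Q$. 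Combined with the standard fact that morphisms of descriptive $\mipc$- and $\msfour$-frames preserve the validity of formulas in their respective languages, this drives both directions.

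For $(\Leftarrow)$, assume $\DF_\L = \R(\sk[\DF_\M])$. By completeness of $\M$, $\M\vdash\varphi^t$ iff every $\G \in \DF_\M$ validates $\varphi^t$; by the translation--skeleton lemma this is equivalent to every $\sk(\G)$ validating $\varphi$; and since reductions preserve $\Lae$-validity, this is equivalent to every $\F \in \R(\sk[\DF_\M]) = \DF_\L$ validating $\varphi$, which by completeness of $\L$ is $\L \vdash \varphi$.

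For $(\Rightarrow)$, assume $\M$ is a modal companion of $\L$. The inclusion $\R(\sk[\DF_\M]) \subseteq \DF_\L$ is immediate: for each theorem $\varphi$ of $\L$ and each $\G \in \DF_\M$, the modal companion property gives $\M \vdash \varphi^t$, so $\G \vDash \varphi^t$, hence $\sk(\G) \vDash \varphi$ by the translation--skeleton lemma, and this passes to reductions. The reverse inclusion $\DF_\L \subseteq \R(\sk[\DF_\M])$ is the substantive content and where I expect the main obstacle.

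To prove this reverse inclusion, I would pass to algebraic duality: descriptive $\mipc$-frames are dual to monadic Heyting algebras, descriptive $\msfour$-frames are dual to monadic $\msfour$-algebras, reductions dualize to injective algebra homomorphisms, and the skeleton functor dualizes to the functor $B \mapsto B^\Box$ taking a monadic $\msfour$-algebra to its subalgebra of $\Box$-fixpoints (equipped with its induced monadic Heyting structure). The algebraic task then becomes: given a monadic Heyting algebra $A$ validating $\L$, construct a monadic $\msfour$-algebra $B$ validating $\M$ together with an embedding $A \hookrightarrow B^\Box$. The candidate is the ``free $\M$-companion'' of $A$: generate a free monadic $\msfour$-algebra from the carrier of $A$ modulo the relations coding the Heyting and quantifier operations of $A$ via the G\"odel translation, and quotient by the $\M$-theory. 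The delicate step is verifying that the natural map $A \to B^\Box$ remains injective after this quotient; this is precisely where the modal companion hypothesis $\L \vdash \varphi \iff \M \vdash \varphi^t$ enters, guaranteeing that every identification forced by $\M$ corresponds to an $\L$-provability already respected by $A \vDash \L$. This is the monadic analogue of the classical Dummett--Lemmon construction, and verifying injectivity in the presence of the left commutativity axiom $\Box\forall p \to \forall\Box p$---where $\Box$ and $\forall$ interact nontrivially---is the technically hardest step.
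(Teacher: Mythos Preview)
The paper does not give its own proof of this theorem; it is cited from \cite{BC24a}, with the remark immediately following the statement noting that the original argument uses algebraic semantics and that the frame-theoretic formulation here is obtained via the duality between algebras and descriptive frames. Your plan---the translation--skeleton compatibility lemma, the easy direction, and the reduction of the hard inclusion $\DF_\L \subseteq \R(\sk[\DF_\M])$ to an algebraic embedding problem---is in line with that approach and is correct in outline.

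The soft spot is your injectivity argument for the free $\M$-companion of an \emph{arbitrary} $\L$-algebra $A$. The modal companion hypothesis $\L \vdash \varphi \Leftrightarrow \M \vdash \varphi^t$ yields injectivity directly only when $A$ is the Lindenbaum--Tarski algebra of $\L$, where elements are formula classes and any collapse is literally an $\L$-provability. For a general $A$ the relations you impose encode the full diagram of $A$, not merely the $\L$-theory, so an identification forced in the quotient need not trace back to a theorem of $\L$; your phrase ``already respected by $A \vDash \L$'' blurs this distinction. What actually closes the gap is a congruence-extension step: if $A' \hookrightarrow C^\Box$ for an $\M$-algebra $C$ and $F$ is a $\forall$-closed filter of $A'$, then---because elements of $A'$ are $\Box$-fixed and $\forall$ on $C^\Box$ is inherited from $C$---the filter of $C$ generated by $F$ is automatically $(\Box,\forall)$-closed and restricts back to $F$ on $A'$, whence $A'/F \hookrightarrow (C/\Theta)^\Box$ for the corresponding congruence $\Theta$. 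Combined with the Lindenbaum case (which you do have) and the trivial closure under subalgebras and products, this shows every $\L$-algebra embeds into some $C^\Box$ with $C$ an $\M$-algebra. The left commutativity axiom is needed earlier, to ensure $C^\Box$ is a monadic Heyting algebra at all, but it is not where the injectivity difficulty actually lives.
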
  

\begin{remark}
The proof of \cite[Thm.~5.12(2)]{BC24a} uses algebraic semantics, but the above reformulation is equivalent using the dual equivalence between the algebraic and descriptive frame semantics.
\end{remark}

\section{Esakia's theorem for \texorpdfstring{$\mpipc$}{M+IPC}}

We recall (see, e.g., \cite[p.~93]{CZ97}) that the {\em Grzegorczyk logic} $\Grz$ is the normal extension of $\sfour$ by the {\em Grzegorczyk axiom}
$$
\grz = 
\Box(\Box(p\to\Box p)\to p)\to p.
$$
Given a descriptive $\sfour$-frame $\G=(Y,R)$, we recall that $x\in Y$ is a {\em maximal point} of $U \subseteq Y$ provided $x \in U$ and $$(\forall y \in U)(x\mathrel{R}y \Longrightarrow x=y).$$ Let $\max U$ denote the set of maximal points of $U$. We have the following characterization of descriptive $\Grz$-frames:

\begin{theorem}\plabel{thm:dual characterization mgrz} \cite[p.~71]{Esa19} Let $\G=(Y,R)$ be a descriptive $\sfour$-frame.
\begin{enumerate}
    \item\label[thm:dual characterization mgrz]{thm:dual characterization mgrz:1} $\G$ is a descriptive $\Grz$-frame iff $U \subseteq R^{-1}[\max U]$ for each clopen $U \subseteq Y$. 
    \item\label[thm:dual characterization mgrz]{thm:dual characterization mgrz:2} If $R$ is a partial order, then $\G$ is a descriptive $\Grz$-frame.
    \item\label[thm:dual characterization mgrz]{thm:dual characterization mgrz:3} If $\G$ is finite, then $\G$ is a $\Grz$-frame iff $R$ is a partial order.
\end{enumerate}
\end{theorem}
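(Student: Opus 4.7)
The plan is to prove (2) and (3) as direct corollaries of (1), and sketch (1), which is the classical Esakia characterization, as the main technical content.

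For (2), assume $R$ is a partial order. By (1), it suffices to prove the max-condition $U \subseteq R^{-1}[\max U]$ for each clopen $U$. Given $x \in U$, apply Zorn's lemma to $(R[x] \cap U, R)$: every $R$-chain $C$ in $R[x] \cap U$ has an upper bound because the closed sets $\{R[c] \cap U : c \in C\}$ form a descending filter base in the compact Stone space $Y$, so by the finite intersection property their intersection is non-empty, and any of its points is an upper bound of $C$. A Zorn-maximal element $y$ of $R[x] \cap U$ then lies in $\max U$ by the antisymmetry of $R$, whence $x \in R^{-1}[y] \subseteq R^{-1}[\max U]$.

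For (3), the backward direction is immediate from (2). For the forward direction, suppose $\G$ is finite and a $\Grz$-frame but $R$ is not antisymmetric: choose distinct $x, y$ with $xRy$ and $yRx$. In the discrete topology of a finite frame, $U = \{x, y\}$ is clopen and $\max U = \emptyset$, since each of $x, y$ has the other as a proper $R$-successor in $U$. This contradicts (1), so $R$ is antisymmetric and therefore a partial order.

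For (1), the plan is to unfold the semantics of $\grz = \Box(\Box(p \to \Box p) \to p) \to p$ at a clopen valuation $v(p) = U$. Writing $\partial U := U \setminus \Box U = U \cap R^{-1}[Y \setminus U]$, a direct computation gives $v(\Box(p \to \Box p) \to p) = R^{-1}[\partial U] \cup U$, and hence $\grz$ is valid on $\G$ iff for every clopen $U$ and every $z \notin U$ there exists $w \in R[z] \setminus U$ with $R[w] \cap \partial U = \emptyset$. Replacing $U$ by $Y \setminus U$, this is equivalent to: for every clopen $U$ and every $x \in U$, there is $w \in R[x] \cap U$ whose successors outside $U$ cannot reach back into $U$. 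A compactness argument on the Stone space $Y$, combined with the continuity of $R$, identifies such $w$ with an element of $\max U$, yielding the max-condition. The main obstacle is completing this identification in the general descriptive setting: one must carefully handle potentially non-trivial $E_R$-equivalence classes using the continuity of $R$ and the clopen separation properties of the Stone topology, so that the global max-condition translates into pointwise witnesses for the Grzegorczyk axiom.
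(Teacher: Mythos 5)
Your proposal addresses a statement that the paper itself does not prove: \cref{thm:dual characterization mgrz} is quoted from Esakia's book \cite[p.~71]{Esa19} with no argument given, so there is no in-paper proof to compare against and your attempt must stand on its own. On that score, your derivations of (2) and (3) are correct: the Zorn-plus-compactness argument for (2) is the standard one (the sets $R[c]\cap U$ for $c$ in a chain are closed, nonempty by reflexivity, and form a descending family, so compactness gives an upper bound, and antisymmetry turns a Zorn-maximal element of $R[x]\cap U$ into a point of $\max U$), and the two-element cluster $U=\{x,y\}$ with $\max U=\emptyset$ correctly settles the forward half of (3). Your semantic unfolding of $\grz$ is also right: validity is equivalent to the condition that for every clopen $U$ and $x\in U$ there is $w\in R[x]\cap U$ such that no $w'\in R[w]\setminus U$ satisfies $R[w']\cap U\neq\emptyset$.

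The genuine gap is in (1), and it is exactly where you flag "the main obstacle": the assertion that compactness "identifies such $w$ with an element of $\max U$" is false as a pointwise claim, in both directions. For the direction from $\grz$-validity to the max-condition, take $U=Y$: the witness condition is then trivially satisfied by $w=x$ in \emph{every} descriptive $\sfour$-frame, while $Y\subseteq R^{-1}[\max Y]$ is a substantive constraint, so a witness is in general nowhere near a maximal point; one must instead run Zorn over the quasi-order to reach a quasi-maximal point and then use $\grz$ at clopen sets separating cluster-mates to upgrade quasi-maximality to strict maximality. For the converse direction, a point $m\in\max U$ above $x$ need not be a witness: if $m$ has a cluster-mate $w'$ outside $U$ (which antisymmetry does not exclude in the descriptive setting), then $m\Rrel w'$, $w'\notin U$, and $w'\Rrel m\in U$, violating the witness condition; ruling this out requires applying the max-condition to \emph{other} clopen sets that separate $m$ from $w'$. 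Both repairs constitute the real content of Esakia's theorem, so (1) remains unproven in your write-up, and note that as structured your part (2) also leans on the unproven half of (1) — although it could be closed off independently, since under antisymmetry the cluster obstruction disappears and maximal points \emph{are} witnesses, so your Zorn argument plus your own semantic unfolding already yields (2) directly.
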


As we pointed out in the introduction, Esakia \cite{Esa79} proved the following:

\begin{theorem} [Esakia's theorem]
    $\Grz$ is the largest modal companion of $\ipc$. Thus, the modal companions of $\ipc$ form the interval $[\sfour,\Grz]$ in the lattice of normal extensions of~$\sfour$. 
\end{theorem}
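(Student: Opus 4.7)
The plan is to follow the two-step strategy outlined in the introduction. I need to show (i) that $\Grz$ is itself a modal companion of $\ipc$, and (ii) that every modal companion $\M$ of $\ipc$ satisfies $\M \subseteq \Grz$. Since every modal companion is by definition a normal extension of $\sfour$, these two facts together yield the interval description $[\sfour,\Grz]$.

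For (i), I would set up the skeleton functor $\rho$ sending each descriptive $\sfour$-frame $(Y,R)$ to the Esakia space $(Y/E_R, R')$, with $E_R$ as in \eqref{eq: ER} and $R'$ the induced partial order on the quotient. A standard topological check shows $\rho$ lands in $\DF_\ipc$. Its right adjoint $\sigma$ sends an Esakia space to itself viewed as an $\sfour$-frame via its partial order; by \cref{thm:dual characterization mgrz:2} this automatically yields a descriptive $\Grz$-frame, and by \cref{thm:dual characterization mgrz:3} the restrictions of $\rho$ and $\sigma$ to finite objects establish an equivalence between finite posets (finite $\ipc$-frames) and finite $\Grz$-frames. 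Combining this equivalence with the classical fmp of both $\ipc$ and $\Grz$ gives $\ipc \vdash \varphi \iff \Grz \vdash \varphi^t$ via the G\"odel translation.

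For (ii), fix a modal companion $\M$ of $\ipc$. By the fmp of $\Grz$, it suffices to verify that each finite $\Grz$-frame $\F$ is an $\M$-frame. The skeleton $\rho\F$ is a finite poset, hence an object of $\DF_\ipc$. Applying the propositional version of \cref{thm: auxiliary 2}, since $\M$ is a modal companion of $\ipc$ there exists a descriptive $\M$-frame $\G$ with $\rho\G \cong \rho\F$. Therefore $\sigma\rho\G \cong \sigma\rho\F$, and since $\F$ is finite \cref{thm:dual characterization mgrz:3} implies $\sigma\rho\F \cong \F$. The counit of $\rho \dashv \sigma$ furnishes a reduction $\G \twoheadrightarrow \sigma\rho\G$; composing, we obtain a reduction $\G \twoheadrightarrow \F$. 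Because descriptive $\M$-frames are closed under reductions, $\F$ is an $\M$-frame.

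The main obstacle is establishing the adjunction $\rho \dashv \sigma$ on descriptive (not just finite) frames: one must verify that $\rho(Y,R)$ carries a Stone topology making $R'$ continuous, and, more crucially, that the counit $\G \twoheadrightarrow \sigma\rho\G$ is a reduction. The latter hinges on the Grzegorczyk axiom's dual characterization in \cref{thm:dual characterization mgrz:1}: every clopen is contained in the $R^{-1}$-image of its maximal points, which is precisely what allows $\sigma\rho\G$ to serve as a p-morphic image of $\G$. The fmp of $\Grz$, classically obtained via filtration, is a further nontrivial ingredient.
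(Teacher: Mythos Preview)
Your proposal is correct and follows essentially the same strategy the paper sketches in its introduction (the paper does not give a formal proof of this classical result, citing \cite{Esa79} instead). One terminological slip: the map $\G \to \sigma\rho\G$ is the \emph{unit} of $\rho \dashv \sigma$, not the counit, and the quotient map is a reduction for every descriptive $\sfour$-frame $\G$---the Grzegorczyk condition enters only via \cref{thm:dual characterization mgrz:3} to guarantee $\sigma\rho\F\cong\F$ for finite $\F$, not to make the quotient a p-morphism.
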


In order to explore Esakia's theorem in the monadic setting, we need to extend the functor~$\sigma$. However, as we pointed out in the introduction, $\sigma$ does not extend in general because if $\F=(X,R,Q)$ is a descriptive $\mipc$-frame, then $E_Q$ may not be a continuous relation, and hence $(X,R,E_Q)$ is not a descriptive $\ms$-frame (see, e.g., \cite[Rem~2.23]{BC24b}). However, it is clear that if $\F$ is finite, then $(X,R,E_Q)$ is an $\ms$-frame. We thus set:

\begin{definition}
    For a finite $\mipc$-frame $\F=(X,R,Q)$ let $\sigma\F=(X,R,E_Q)$,
    and for a morphism $f \colon \F_1 \to \F_2$ between finite $\mipc$-frames, let $\sigma \! f = f$.
\end{definition}

We use $\sigma$ and $\rho$ to obtain a relationship between finite $\mipc$-frames and finite $\mgrz$-frames. 

\begin{lemma}\label{lem: E=EQE}
    For each finite $\mgrz$-frame $\F=(X,R,E)$, we have $E=E_{Q_E}$. 
\end{lemma}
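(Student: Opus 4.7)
I plan to prove the two inclusions separately. The easy direction $E \subseteq E_{Q_E}$ is immediate: if $x \Erel y$ then, since $R$ is reflexive, $z = x$ witnesses $x \mathrel{Q_E} y$ and $z = y$ witnesses $y \mathrel{Q_E} x$, so $x \mathrel{E_{Q_E}} y$.

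The substantive direction is $E_{Q_E} \subseteq E$, and the two ingredients I would combine are that $R$ is a partial order in any finite $\mgrz$-frame (by \cref{thm:dual characterization mgrz:3}) and the left commutativity property of descriptive $\msfour$-frames (\cref{def:descriptive ms-frame:item4}). Unfolding $x \mathrel{E_{Q_E}} y$ yields witnesses $z_1, z_2 \in X$ with $x \Rrel z_1 \Erel y$ and $y \Rrel z_2 \Erel x$. I would then construct inductively a zig-zag chain
\[
x = x_0 \Rrel y_0 \Rrel x_1 \Rrel y_1 \Rrel x_2 \Rrel \cdots
\]
with $x_i \in E[x]$ and $y_i \in E[y]$ for every $i$. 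The inductive step is driven entirely by \cref{def:descriptive ms-frame:item4}: given $x_i \Erel x$, applying the property to $x_i \Erel x$ and $x \Rrel z_1$ produces $y_i$ with $x_i \Rrel y_i$ and $y_i \Erel z_1 \Erel y$; given $y_i \Erel y$, applying it to $y_i \Erel y$ and $y \Rrel z_2$ produces $x_{i+1}$ with $y_i \Rrel x_{i+1}$ and $x_{i+1} \Erel z_2 \Erel x$.

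Since $X$ is finite, the $R$-ascending sequence $x_0, x_1, x_2, \ldots$ must eventually repeat; combined with antisymmetry of $R$ this forces $x_n = x_{n+1}$ for some $n$. Then $x_n \Rrel y_n \Rrel x_{n+1} = x_n$, and a second appeal to antisymmetry gives $x_n = y_n$, placing a single point into $E[x] \cap E[y]$ and hence yielding $x \Erel y$. The only real thing to verify carefully is that at each stage of the zig-zag the hypotheses of \cref{def:descriptive ms-frame:item4} are actually met with the correct endpoints; the finiteness argument and both uses of antisymmetry are then entirely routine.
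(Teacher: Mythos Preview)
Your argument is correct. The zig-zag construction is sound: at each stage the hypotheses of \cref{def:descriptive ms-frame:item4} are met exactly as you describe, transitivity of $R$ gives an ascending chain $x_0 \Rrel x_1 \Rrel x_2 \Rrel \cdots$ in a finite set, pigeonhole yields $x_i = x_j$ for some $i<j$, and antisymmetry collapses the cycle to give $x_n = x_{n+1}$ and then $x_n = y_n \in E[x]\cap E[y]$.

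The paper takes a different route: it simply observes that $R$ is a partial order (via \cref{thm:dual characterization mgrz:3}) and then invokes \cite[Lem.~3(b)]{Bez99}, which supplies the equality $E = E_{Q_E}$ directly. So the paper outsources the substantive work to an external reference, whereas you give a self-contained elementary argument. Your proof has the advantage of exposing exactly how finiteness and antisymmetry interact with the left-commutativity condition; the paper's approach is terser and, via the cited lemma, applies in the more general setting where $R$ is merely a partial order (not necessarily on a finite carrier), though only the finite case is needed here.
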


\begin{proof}
    Since $\F$ is a finite $\mgrz$-frame, $R$ is a partial order. Thus, \cite[Lem.~3(b)]{Bez99} applies, by which $E=E_{Q_E}$.
\end{proof}

\begin{proposition}\ \plabel{prop: sigma and rho finite}
    \begin{enumerate}
        \item\label[prop: sigma and rho finite]{prop: sigma and rho finite:1} For a finite $\mipc$-frame $\F=(X,R,Q)$, $\sigma\F$ is a finite $\mgrz$-frame and $\F\cong\rho\sigma\F$.
        \item\label[prop: sigma and rho finite]{prop: sigma and rho finite:2} For a finite $\mgrz$-frame $\G=(Y,R,E)$, $\rho\G$ is a finite $\mipc$-frame and $\G\cong\sigma\rho\G$.
    \end{enumerate}
\end{proposition}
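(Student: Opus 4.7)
The plan is to unpack both statements directly, using the fact that finiteness forces the underlying $R$ to be a partial order in both cases (condition \cref{def:ono:item2} for $\mipc$-frames, and \cref{thm:dual characterization mgrz:3} for $\mgrz$-frames), so that the equivalence $E_R$ from \eqref{eq: ER} is the identity and the skeleton quotient collapses. Continuity conditions are automatic because the topology is discrete.

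For \myref{prop: sigma and rho finite:1}, I first verify that $\sigma\F=(X,R,E_Q)$ is a descriptive $\ms$-frame: $R$ is a continuous quasi-order (it is actually a partial order), $E_Q$ is a continuous equivalence relation by construction, and the only substantive check is the lifting condition \cref{def:descriptive ms-frame:item4}. Given $x\Erel_Q y$ and $y\Rrel z$, I use $R\subseteq Q$ (\cref{def:ono:item5}) to get $y\Qrel z$, then transitivity of $Q$ yields $x\Qrel z$, and \cref{def:ono:item6} produces the required $u$ with $x\Rrel u$ and $u\Erel_Q z$. Since $R$ is a partial order, \cref{thm:dual characterization mgrz:3} promotes $\sigma\F$ to a finite $\mgrz$-frame. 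For $\F\cong\rho\sigma\F$, note $E_R=\id_X$, so $\rho\sigma\F$ has the same carrier $X$ and same $R$, with quasi-order $Q_{E_Q}=E_Q\circ R$. It remains to show $Q=E_Q\circ R$: the inclusion $\supseteq$ follows from $R\subseteq Q$ and $E_Q\subseteq Q$ combined with transitivity of $Q$, while $\subseteq$ is precisely \cref{def:ono:item6}.

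For \myref{prop: sigma and rho finite:2}, $R$ is a partial order by \cref{thm:dual characterization mgrz:3}, so $\rho\G$ has underlying carrier $Y$ with $R$ unchanged and second relation $Q_E$. That $\rho\G$ is a descriptive $\mipc$-frame is immediate from \cref{lem: skeleton}. Then $\sigma\rho\G=(Y,R,E_{Q_E})$, and \cref{lem: E=EQE} gives $E=E_{Q_E}$, so $\sigma\rho\G=\G$ on the nose. The identity map is the required isomorphism in both isomorphism claims.

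I do not anticipate any genuine obstacles: the proposition is essentially a bookkeeping statement built on top of the already-cited \cite[Lem.~3(b)]{Bez99} (supplying \cref{lem: E=EQE}) and \cite[Lem.~5.15]{BC24a} (supplying well-definedness of $\rho$). The only step that requires more than definition-chasing is the verification of \cref{def:descriptive ms-frame:item4} for $\sigma\F$ in part~(1), but this is a three-line argument combining \cref{def:ono:item5,def:ono:item6} with transitivity of $Q$.
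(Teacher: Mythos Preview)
Your proof is correct and follows essentially the same approach as the paper's. The paper is slightly terser in part~(1): it treats the fact that $(X,R,E_Q)$ is an $\ms$-frame as already observed in the paragraph preceding the definition of $\sigma$, and asserts the key identity $Q=Q_{E_Q}$ (your $Q=E_Q\circ R$) without unpacking the two inclusions, whereas you supply both the verification of \cref{def:descriptive ms-frame:item4} and the justification for each inclusion; part~(2) is identical in both.
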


\begin{proof}
    \eqref{prop: sigma and rho finite:1} Since $\F$ is finite, so is $\sigma\F$, and it follows from \cref{thm:dual characterization mgrz:3} that $\sigma\F$ is an $\mgrz$-frame. Moreover, since for all $x,y\in X$, $$x\mathrel{Q}y \iff \exists z \in X : x \Rrel z \; \& \; z \mathrel{E_Q}y,$$ the map $x \mapsto \{x\}$ is a bijection that preserves and reflects both $R$ and $Q$. Thus, it is an isomorphism of the $\mipc$-frames $\F$ and $\rho\sigma\F$. 

\eqref{prop: sigma and rho finite:2} Clearly $\rho\G$ is finite, and it is an $\mipc$-frame by \cref{lem: skeleton}. By \cref{thm:dual characterization mgrz:3}, $R$ is a partial order. 
Therefore, 
\cref{lem: E=EQE}
yields
that the map $x \mapsto \{x\}$ is a bijection that preserves and reflects both $R$ and $E$. Thus, it is an isomorphism of the $\mgrz$-frames 
$\G$ and~$\sigma\rho\G$.
\end{proof}

\begin{definition}
    Let $\mipcfin$ denote the category of finite $\mipc$-frames and their morphisms, and $\mgrzfin$ the category of finite $\mgrz$-frames and their morphisms.     
\end{definition}

In view of \cref{prop: sigma and rho finite}, one might expect that $\rho$ and $\sigma$ establish an equivalence of $\mgrzfin$ and $\mipcfin$. However, this is not the case because there exist
$\F_1,\F_2 \in \mipcfin$ and a $\mipcfin$-morphism $f$ between them such that $\sigma \! f\colon\sigma\F_1\to\sigma\F_2$ is not a p-morphism with respect to $E_Q$, and hence $\sigma$ is not well-defined on morphisms: 

\begin{example}
Let $\F_1=(X_1,R_1,Q_1)$ and $\F_2=(X_2,R_2,Q_2)$ be the finite $\mipc$-frames shown below, where the black arrows indicate the partial orders $R_1,R_2$ and the red circles 
the equivalence relations $E_{Q_1},E_{Q_2}$. 

\begin{figure}[ht]
\begin{tikzpicture}[-{Latex[width=1mm]}]
\coordinate (3BL) at (-5,-5);
\coordinate (3BR) at (-3,-5);
\coordinate (3T) at (-4,-3);
\fill (3BL) circle(2pt);
\fill (3BR) circle(2pt);
\fill (3T) circle(2pt);
\draw (3BL) -- (3T);
\draw (3BR) -- (3T);
\clustertwo{3BL}{3T}{1.3}{1.25};
\clusterone{3BR}{1.2};
\node at ([shift={(-90:0.6)}]3BL) {$a$};
\node at ([shift={(90:0.6)}]3T) {$b$};
\node at ([shift={(-90:0.6)}]3BR) {$c$};
\node at (-4,-6.3) {$\F_1$};
\coordinate (4B) at (3,-5);
\coordinate (4T) at (3,-3);
\fill (4B) circle(2pt);
\fill (4T) circle(2pt);
\draw (4B) -- (4T);
\clustertwo{4B}{4T}{1.35}{1.2};
\node at ([shift={(-90:0.6)}]4B) {$u$};
\node at ([shift={(90:0.6)}]4T) {$v$};
\node at (3,-6.3) {$\F_2$};
\end{tikzpicture}
\end{figure}
We have $\sigma\F_i=(X_i,R_i,E_{Q_i})$ for $i=1,2$.  
Define $f\colon X_1\to X_2$ by 
$f(a)=u$ and ${f(b)=f(c)=v}$.
Then 
$f$ is not a p-morphism with respect to $E_Q$ since 
\[
E_{Q_2}[f(c)] = \{u,v\} \neq \{v \} = fE_{Q_1}[c].
\]
Therefore, $\sigma \! f=f$
is not a $\mgrzfin$-morphism.
On the other hand, $f$ is a $\mipcfin$-morphism because $E_{Q_2}[f(x)] = fE_{Q_1}[x]$ for $x=a,b$ and 
\[
E_{Q_2}[f(c)] = \{u,v\} = R_2^{-1}fE_{Q_1}[c].
\] 
\end{example}

By \cref{thm: auxiliary 2}, if $\M$ is a modal companion of $\mipc$, then for each finite $\mipc$-frame $\F$ there is a descriptive $\M$-frame $\G$ such that $\rho\G\twoheadrightarrow \F$. The main obstacle in proving Esakia's theorem for $\mipc$ is that we no longer have $\G\twoheadrightarrow\sigma\F$, which results in the following:

\begin{theorem} \cite[Thm.~5.10]{BC24b}
$\mipc$ has no largest modal companion. 
\end{theorem}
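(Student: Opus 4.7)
The plan is to derive a contradiction by assuming that $\mipc$ has a largest modal companion $\M_{\max}$. Any intersection of modal companions of $\mipc$ is again a modal companion (immediate from the definition), so the existence of a largest modal companion is equivalent to the collection of modal companions being closed under joins in the lattice of normal extensions of $\msfour$. I would therefore aim to exhibit two modal companions $\M_1, \M_2$ of $\mipc$ together with a formula $\varphi$ of $\Lae$ such that $\varphi^t \in \M_1 \sqcup \M_2$ but $\mipc \nvdash \varphi$. Since $\M_1, \M_2 \subseteq \M_{\max}$, this would yield $\varphi^t \in \M_{\max}$, contradicting the fact that $\M_{\max}$ is a modal companion.

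To construct $\M_1$ and $\M_2$, I would build on the example immediately preceding the theorem. The $\mipcfin$-morphism $f\colon \F_1 \twoheadrightarrow \F_2$ does not lift to a $\msfrm$-morphism from $\sigma\F_1$ onto $\sigma\F_2$, so the two $\sigma$-lifts sit in $\msfrm$ in an essentially incomparable fashion. I would assemble two classes $\mathcal C_1, \mathcal C_2$ of descriptive $\msfour$-frames whose skeletons (and reductions thereof) exhaust all finite $\mipc$-frames (using the fmp of $\mipc$ and \cref{prop: sigma and rho finite}), but which use genuinely distinct $\msfour$-lifts of the critical frame $\F_2$: for instance, $\sigma\F_2$ itself in one class, and a lift obtained by relating $\F_2$ to $\sigma\F_1$ via the $\mipcfin$-morphism $f$ in the other. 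By \cref{thm: auxiliary 2}, each logic $\M_i = \Log(\mathcal C_i)$ is then a modal companion of $\mipc$, and by construction $\M_1$ and $\M_2$ impose incompatible constraints on the $E$-relation of $\msfour$-frames witnessing $\F_2$.

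The main obstacle, and the technical heart of the proof, is pinning down the witnessing formula $\varphi$. Semantically, $\M_1 \sqcup \M_2$ is the logic of the smallest appropriately closed class of descriptive $\msfour$-frames containing $\mathcal C_1 \cup \mathcal C_2$; its valid formulas include any $\Lba$-formula common to both families, so the combined $E$-structural constraints force an $\msfour$-axiom not enforced by either family alone. I expect the corresponding intuitionistic preimage $\varphi$ to express a monotonicity or distributivity principle linking $\forall$ and $\exists$ that is satisfied by both systems of lifts but refutable in a standard $\mipc$-model—plausibly $\F_1$ itself, precisely because $\F_1$ hosts the problematic $\mipcfin$-morphism $f$. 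Verifying $\mipc \nvdash \varphi$ via an explicit $\mipc$-countermodel and establishing $\varphi^t \in \M_1 \sqcup \M_2$ through the characterization in \cref{thm: auxiliary 2} would complete the contradiction.
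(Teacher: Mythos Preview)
The paper does not prove this theorem: it is stated with a bare citation to \cite[Thm.~5.10]{BC24b} and no argument is given. There is therefore no proof in the present paper against which your attempt can be compared.

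What you have written, in any case, is a research outline rather than a proof, and the steps that carry all the weight are left entirely open. You never actually construct the companions $\M_1,\M_2$: saying that one frame class uses $\sigma\F_2$ while the other uses ``a lift obtained by relating $\F_2$ to $\sigma\F_1$ via the $\mipcfin$-morphism $f$'' is not a definition, and even once two frame classes $\mathcal C_1,\mathcal C_2$ are fixed, the logics $\M_i=\Log(\mathcal C_i)$ need not satisfy $\DF_{\M_i}=\mathcal C_i$ (or any controllable closure thereof), so the hypothesis of \cref{thm: auxiliary 2} is not automatic. More seriously, the witnessing formula $\varphi$ is never produced: you say only that you ``expect'' it to express some unspecified distributivity principle and that verifying $\varphi^t\in\M_1\sqcup\M_2$ ``would complete the contradiction.'' But exhibiting a concrete $\Lae$-formula with $\mipc\nvdash\varphi$ yet $\varphi^t$ derivable in the join of two specific companions is exactly the substance of such a theorem, and nothing in your sketch indicates how to find it or why the example frames $\F_1,\F_2$ would generate one. (A minor point: the equivalence asserted in your first paragraph---that closure of modal companions under intersections makes ``largest element'' equivalent to closure under joins---is not correct as stated, since only closure under \emph{arbitrary} joins would give a maximum; fortunately your actual contradiction argument does not rely on this equivalence, only on the inclusion $\M_1\sqcup\M_2\subseteq\M_{\max}$.)
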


The situation changes when we work with Esakia's amended predicate intuitionistic logic \cite{Esa98} and its monadic fragment $\mpipc$.

\begin{definition} \cite[p.~429]{BBI23}
The {\em amended monadic intuitionistic calculus} $\mpipc$ is obtained by adding to $\mipc$ 
the {\em monadic Casari formula} $$\forall((p \to \forall p) \to \forall p) \to \forall p,$$ and the {\em amended monadic Grzegorczyk logic} $\mpgrz$ by adding to $\mgrz$ the G\"odel translation of the monadic Casari formula.
\end{definition}

\begin{proposition}\ \plabel{prop: clean clusters}
\begin{enumerate}
\item\label[prop: clean clusters]{prop: clean clusters: 1}\cite[Thm.~5.17]{BBI23} $\mpipc$ has the fmp.
\item\label[prop: clean clusters]{prop: clean clusters: 2}\cite[Lem.~4.4]{BBI23} A finite $\mipc$-frame $\F=(X,R,Q)$ is an $\mpipc$-frame 
iff 
\[
(\forall x,y \in X)(x\Rrel y \mbox{ and } x\mathrel{E_Q}y \Longrightarrow x=y).
\]
\end{enumerate}
\end{proposition}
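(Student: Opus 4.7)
The plan is to address part~(2) first, as the characterization of finite $\mpipc$-frames is the technical heart of the proposition and will feed into the fmp argument for part~(1).

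For the forward direction of part~(2), the strategy is to exhibit a valuation refuting the monadic Casari formula. Given distinct $x, y \in X$ with $x \Rrel y$ and $x \EQrel y$, I would take $v(p) = R[y]$, which is an $R$-upset because $R$ is transitive. Since $R$ is a partial order and $x \neq y$ with $x \Rrel y$, we have $x \notin R[y]$, so $x \not\vDash p$; combined with $x \Qrel x$, this yields $x \not\vDash \forall p$. The more delicate task is to verify $x \vDash \forall((p \to \forall p) \to \forall p)$ by a direct case analysis on points in $Q[x]$, using \cref{def:ono:item6} and the relation $x \EQrel y$ to see that every instance of the implication $(p \to \forall p) \to \forall p$ inside $Q[x]$ holds either substantively or vacuously. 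This furnishes the required refutation of Casari at $x$.

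For the backward direction, assume the cleanness condition and suppose that Casari fails at some $w$ under some valuation $v$. Then $w \vDash \forall((p \to \forall p) \to \forall p)$ while some $u_0 \in Q[w]$ fails $p$. Using finiteness of $X$ and the fact that $Q[w]$ is closed under $R$ (because $R \subseteq Q$ and $Q$ is transitive), pick $y \in Q[w]$ failing $p$ and $R$-maximal with this property. Then $y \vDash (p \to \forall p) \to \forall p$, and it suffices to show $y \vDash p \to \forall p$. Take any $R$-successor $z$ of $y$ with $z \vDash p$, which forces $z \neq y$; to see $z \vDash \forall p$, suppose some $u \in Q[z]$ fails $p$. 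By \cref{def:ono:item6} applied to $y \Qrel u$ there is $z'$ with $y \Rrel z'$ and $z' \EQrel u$. If $z' \neq y$, then $z'$ is strictly $R$-above $y$ and lies in $Q[w]$; by maximality of $y$ we must have $z' \vDash p$, and an iterated application of \cref{def:ono:item6} eventually reduces to the case $z' = y$. But then $y \EQrel u$ together with an $R$-path from $y$ to $u$ violates the cleanness condition. Hence $y \vDash \forall p$, contradicting $y \not\vDash p$. The main obstacle here is the careful bookkeeping needed to iterate \cref{def:ono:item6} and turn the presence of a non-trivial $R$-chain inside an $E_Q$-class into the required failure of cleanness.

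For part~(1), my plan is to adapt the standard fmp argument for $\mipc$ (along the lines of \cite[Sec.~10.3]{GKWZ03}) by post-processing the finite $\mipc$-frame $\F'$ obtained from a refutation of a formula $\phi \notin \mpipc$. The filtered frame $\F'$ may violate the cleanness condition of part~(2), so I would split each $E_Q$-cluster containing distinct $R$-comparable elements into an $R$-antichain by duplicating points appropriately, ensuring that the resulting finite frame is clean and hence an $\mpipc$-frame by part~(2), while still refuting $\phi$. The main obstacle is verifying that this cleaning operation preserves the refutation: this amounts to exhibiting a $\mipcfin$-morphism from the cleaned frame onto $\F'$ and transporting a refuting valuation back along it.
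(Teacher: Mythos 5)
The paper does not actually prove this proposition: both items are quoted from \cite{BBI23} (Thm.~5.17 and Lem.~4.4), so your attempt is a blind reconstruction of the cited source, and it contains genuine gaps in all three places. In part~(2), forward direction, the valuation $v(p)=R[y]$ does not refute the monadic Casari formula in general. Take $X=\{x,y,t\}$ with $R$ the reflexive relation plus $x\Rrel y$, and $Q=X\times X$; this is a finite $\mipc$-frame (condition~(6) of the definition of descriptive $\mipc$-frames holds trivially since $E_Q=X\times X$), and the pair $x,y$ is dirty. With $v(p)=R[y]=\{y\}$ the point $t$ satisfies $p\to\forall p$ vacuously (as $R[t]=\{t\}$ and $t\not\vDash p$) but fails $\forall p$, so $x\not\vDash\forall((p\to\forall p)\to\forall p)$ and Casari is \emph{not} refuted at $x$ (indeed it holds everywhere under this valuation). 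A valuation that works is $v(p)=X\setminus R^{-1}[x]$: if $t\in Q[x]$ fails $p$, then $t\Rrel x\Rrel y$, and $y\vDash p$ by antisymmetry; so $t\vDash p\to\forall p$ would force $y\vDash\forall p$, contradicting $y\Qrel x\not\vDash p$ --- hence the antecedent of Casari holds at $x$ while $\forall p$ fails. In the backward direction your two closing steps are unjustified: the ``iterated application of (6) eventually reduces to the case $z'=y$'' has no progress or termination argument (the iteration can stabilize at $z'\neq y$ with $z'\vDash p$ and $z'\EQrel u$, which is perfectly consistent), and the claimed ``$R$-path from $y$ to $u$'' does not exist --- you only have $y\Qrel u$, and a $Q$-connection can never violate cleanness, since any two $E_Q$-classmates are $Q$-related both ways. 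The repair is to take $u$ \emph{$Q$-maximal} (not $R$-maximal) among the points of $Q[w]$ refuting $p$: then for $t\in R[u]$ with $t\vDash p$ and $s\in Q[t]$ with $s\not\vDash p$, maximality gives $s\EQrel u$; applying condition~(6) to $t\Qrel s$ yields $m$ with $t\Rrel m$ and $m\EQrel s\EQrel u$, so $u\Rrel m$ and cleanness force $m=u$, whence $t\Rrel u$ and $u\Rrel t$ give $t=u$ by antisymmetry, contradicting their $p$-status; thus $u\vDash p\to\forall p$, so $u\vDash\forall p$, contradicting $u\not\vDash p$.

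Your plan for part~(1) fails in principle, not just in detail. If there were a $\mipcfin$-morphism from a cleaned (hence clean) finite frame $\F''$ onto the dirty filtrated frame $\F'$, then, since reductions preserve validity of $\Lae$-formulas, $\F'$ would validate Casari and hence be clean by \cref{prop: clean clusters: 2} --- contradicting the assumption that $\F'$ needed cleaning. Equivalently, pulling back a valuation along such a morphism preserves truth of \emph{all} formulas pointwise, including the Casari instances that are valid on $\F''$ but refutable on $\F'$. So no ``splitting clusters into antichains'' construction can admit the morphism your transport argument requires, precisely in the cases where it is needed. The fmp of $\mpipc$ cannot be obtained by post-processing the $\mipc$ fmp this way; the proof in \cite{BBI23} constructs finite clean countermodels directly by a substantially more delicate argument.
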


\needspace{5\baselineskip} 
\begin{remark}\
\begin{enumerate}
    \item The condition in \cref{prop: clean clusters: 2}
    is known as having clean clusters (see \cite[Def.~3.6]{BBI23}).
    \item \cref{prop: clean clusters: 2} has a generalization to all descriptive $\mipc$-frames (see \cite[Lem.~38]{Bez00} and \cite[Lem.~4.2]{BBI23}). For our purposes, it is enough to work with finite $\mpipc$-frames.
    \item For a characterization of descriptive $\mpgrz$-frames see \cite[Lem.~4.8]{BBI23}.
\end{enumerate}
\end{remark}

\begin{definition}
    Let $\mpipcfin$ denote the full subcategory of $\mipcfin$ consisting of $\mpipc$-frames, and $\mpgrzfin$ the full subcategory of $\mgrzfin$ consisting of $\mpgrz$-frames.     
\end{definition}

\begin{theorem}
    The functors $\rho\colon\mpgrzfin\to\mpipcfin$ and $\sigma\colon\mpipcfin\to\mpgrzfin$ yield an equivalence of $\mpgrzfin$ and $\mpipcfin$.
\end{theorem}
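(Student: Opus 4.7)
\emph{Plan.} The strategy is to leverage \cref{prop: sigma and rho finite}, which already provides $\F\cong\rho\sigma\F$ and $\G\cong\sigma\rho\G$ via $x\mapsto\{x\}$ for finite $\F$ and $\G$. It therefore suffices to verify (i) $\rho$ sends $\mpgrzfin$-objects to $\mpipcfin$-objects, (ii) $\sigma$ sends $\mpipcfin$-objects to $\mpgrzfin$-objects, and (iii) $\sigma$ is functorial on $\mpipcfin$-morphisms. The first two are bookkeeping against the existing frame characterizations; the real issue is (iii), since the example immediately preceding the theorem shows that $\sigma$ is genuinely not functorial on general $\mipcfin$-morphisms.

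First I would dispatch (i) and (ii) together. For $\G=(Y,R,E)\in\mpgrzfin$, $R$ is a partial order, so $\rho\G=(Y,R,Q_E)$, and \cref{lem: E=EQE} gives $E=E_{Q_E}$. The clean-cluster condition of \cref{prop: clean clusters: 2} for $\rho\G$ then reads ``$x\Rrel y$ and $x\Erel y$ imply $x=y$'', which is precisely the $\mpgrz$-condition on $\G$ \cite[Lem.~4.8]{BBI23}, so $\rho\G\in\mpipcfin$. The same equivalence, read in reverse, gives $\sigma\F\in\mpgrzfin$ whenever $\F\in\mpipcfin$.

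The main obstacle is (iii). Let $f\colon\F_1\to\F_2$ be a $\mipcfin$-morphism between $\mpipc$-frames $\F_i=(X_i,R_i,Q_i)$. By \cref{def:msfrm-morphisms}, I need to verify that $f\colon\sigma\F_1\to\sigma\F_2$ is a p-morphism with respect to the $E_{Q_i}$; the $R$-part is inherited from $f\in\mipcfin$, and continuity is automatic since the topologies are discrete. The inclusion $fE_{Q_1}[x]\subseteq E_{Q_2}[f(x)]$ is immediate, since $f$ being a $Q$-p-morphism forces it to preserve the induced equivalences. For the reverse, fix $y\in E_{Q_2}[f(x)]$. By \cref{rem:morph dmsfrm:item1}, $E_{Q_2}[f(x)]=R_2^{-1}fE_{Q_1}[x]$, so there exists $z\in E_{Q_1}[x]$ with $y\mathrel{R_2}f(z)$. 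Moreover, $f(z)\mathrel{E_{Q_2}}f(x)\mathrel{E_{Q_2}}y$, so $y\mathrel{E_{Q_2}}f(z)$, and the clean-cluster condition on $\F_2$ forces $y=f(z)\in fE_{Q_1}[x]$. This is the single place where the Casari amendment is used, and it is exactly what patches the failure exhibited in the preceding example. Functoriality on compositions and identities is then immediate from $\sigma f=f$, and naturality of the isomorphisms in \cref{prop: sigma and rho finite} is evident from their definition, completing the equivalence.
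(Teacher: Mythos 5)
Your proposal is correct and follows essentially the same route as the paper: the crucial step (iii) --- taking $y \in E_{Q_2}[f(x)]$, producing $z$ with $x \mathrel{E_{Q_1}} z$ and $y \mathrel{R_2} f(z)$ via \cref{rem:morph dmsfrm:item1}, and invoking the clean-cluster condition of \cref{prop: clean clusters: 2} to force $y = f(z)$ --- is exactly the paper's argument. Your explicit verification of (i) and (ii) is a harmless addition that the paper leaves implicit in its appeal to \cref{prop: sigma and rho finite}.
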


\begin{proof}
    In view of \cref{prop: sigma and rho finite}, it is sufficient to show that $\sigma$ is well defined on $\mpipcfin$-morphisms. Let $\F_1=(X_1,R_1,Q_1)$ and $\F_2=(X_2,R_2,Q_2)$ be finite $\mpipc$-frames and $f\colon\F_1\to\F_2$ a $\mpipcfin$-morphism. It is enough to show that $E_{Q_2}[f(x)]\subseteq fE_{Q_1}[x]$ for all $x \in X_1$. Let $f(x) \mathrel{E_{Q_2}} y$. By \cref{rem:morph dmsfrm:item1}, there is $z \in X_1$ such that $x\mathrel{E_{Q_1}}z$ and $y \Rrel_2 f(z)$. From $x \mathrel{E_{Q_1}} z$ it follows that $f(x) \Erel_{Q_2} f(z)$. Therefore, $y \Rrel_2 f(z)$ and $y \Erel_{Q_2} f(z)$. Since $\F_2$ is an $\mpipc$-frame, $y=f(z)$ by \cref{prop: clean clusters: 2}, concluding the proof.   
\end{proof}

We now prove the main result of this paper, that the modal companions of $\mpipc$ form the interval $[\mpsfour,\mpgrz]$ in the lattice of monadic extensions of $\msfour$, where $\mpsfour$ is the monadic extension of $\msfour$ by the G\"odel translation of the monadic Casari formula. For this we utilize the following:

\begin{theorem}\ \plabel{thm: mgrz properties}
\begin{enumerate}
    \item {\cite[Thm.~4.12]{BBI23}} \label[thm: mgrz properties]{thm: modal companion} $\mpgrz$ is a modal companion of $\mpipc$.
    \item {\cite[Thm.~6.16]{BBI23}} \label[thm: mgrz properties]{thm: auxiliary 1} $\mpgrz$ has the fmp. 
\end{enumerate}
\end{theorem}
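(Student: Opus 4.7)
Both items of this theorem are attributed to \cite{BBI23}, so my plan is to sketch the broad strategies rather than carry out full derivations.

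For (1), I would establish $\mpipc \vdash \varphi \iff \mpgrz \vdash \varphi^t$ in two steps. Soundness $(\Rightarrow)$ is routine: by induction on the length of an $\mpipc$-derivation, I would check that the translation of each axiom is a theorem of $\mpgrz$ and that every inference rule is mirrored under $(-)^t$. The only axiom beyond $\mipc$ is the monadic Casari formula, whose translation is in $\mpgrz$ by construction, while translations of $\mipc$-axioms are theorems of $\msfour \subseteq \mpgrz$ by \cref{thm: MS4 modal comp of MIPC}. For conservativity $(\Leftarrow)$, I would argue semantically: assume $\mpipc \not\vdash \varphi$ and use the fmp of $\mpipc$ (\cref{prop: clean clusters: 1}) to obtain a finite $\mpipc$-frame $\F$ refuting $\varphi$; then pass to the finite $\mpgrz$-frame $\sigma\F$ via \cref{prop: sigma and rho finite:1} and prove by induction on $\varphi$ that $\sigma\F \not\vDash \varphi^t$. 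The key case is $\forall\varphi$: its semantics in $\F$ uses $Q$, whereas $\Box\forall\varphi^t$ is evaluated in $\sigma\F$ using $R$ followed by $E_Q$; these match because $Q = E_Q \circ R$ in any descriptive $\mipc$-frame, which follows from \cref{def:ono:item5,def:ono:item6} together with transitivity of $Q$.

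For (2), the plan is a filtration argument. Starting from a descriptive $\mpgrz$-frame $\G$ refuting $\varphi$, I would fix a finite subformula-closed set $\Sigma$ containing $\varphi$ and build a filtered quotient of $\G$ through $\Sigma$. The resulting finite frame must satisfy two conditions: (a) it should be an $\mgrz$-frame, equivalent in the finite setting to $R$ being a partial order by \cref{thm:dual characterization mgrz:3}, and (b) it should validate the translated monadic Casari axiom, which amounts to having clean clusters by \cref{prop: clean clusters: 2}. I expect (b) to be the main obstacle: a naive filtration can easily produce distinct points that are simultaneously $R$-related and $E$-related, destroying cleanness of clusters. Overcoming this requires a carefully designed selective filtration that collapses $E_R$-clusters while respecting $E$, which is the heart of the construction in \cite{BBI23}.
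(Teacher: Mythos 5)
Both items are stated in the paper purely by citation to \cite{BBI23}; there is no in-paper proof to match, so the question is whether your sketch would stand on its own. For item (1), the soundness direction is fine, and your identity $Q = E_Q \circ R$ (from \cref{def:ono:item5,def:ono:item6} plus transitivity of $Q$) is exactly what makes the $\forall$-case of the induction work, matching the paper's convention \eqref{eq: QE}. But the conservativity direction, as written, has a genuine gap: \cref{prop: sigma and rho finite:1} only tells you that $\sigma\F$ is a finite $\mgrz$-frame, and a $\mgrz$-frame refuting $\varphi^t$ gives no information about $\mpgrz$, which is a \emph{proper extension} of $\mgrz$. To conclude $\mpgrz \not\vdash \varphi^t$ you must additionally check that $\sigma\F$ validates the G\"odel translation of the monadic Casari formula, i.e., that it is an $\mpgrz$-frame. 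The omission is repairable by the very induction you propose: with matched valuations (interpret $p$ on $\F$ by the value of $\Box p$ on $\sigma\F$, which is an $R$-upset by transitivity of $R$), one gets $x \vDash_\F \psi$ iff $x \vDash_{\sigma\F} \psi^t$ for \emph{all} formulas $\psi$ of $\Lae$; applying this with $\psi$ the Casari formula, which $\F$ validates being an $\mpipc$-frame, yields that $\sigma\F$ validates its translation. With that sentence added, (1) becomes the standard companion argument.

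For item (2) you offer a plan rather than a proof, and you say so; given that the paper itself defers to \cite{BBI23}, that is defensible, but two points deserve care. First, \cref{prop: clean clusters: 2} characterizes finite $\mpipc$-frames, not $\msfour$-frames, so it does not directly say what the filtrated modal frame must satisfy; the transfer requires either the modal-side characterization \cite[Lem.~4.8]{BBI23} or the observation that once your condition (a) holds and $R$ is a partial order, \cref{lem: E=EQE} gives $E = E_{Q_E}$, whence validity of the translated Casari axiom amounts to $x \Rrel y$ and $x \Erel y$ implying $x = y$. Second, and more substantively, your sketch supplies no mechanism for simultaneously securing the partial order, the commutativity condition \cref{def:descriptive ms-frame:item4}, and clean clusters under filtration --- naive filtration already fails for $\Grz$ alone --- and that mechanism is the actual content of \cite[Sec.~6]{BBI23}. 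So (1) is correct modulo the fix above, while (2) remains an accurate diagnosis of the difficulty rather than an argument.
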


\begin{theorem} \label{thm: main}
$\mpgrz$ is the greatest modal companion of $\mpipc$.
\end{theorem}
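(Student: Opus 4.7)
My plan is to combine the finite model property of $\mpgrz$ (\cref{thm: auxiliary 1}) with the characterization of modal companions (\cref{thm: auxiliary 2}) by means of a lifting lemma from reductions in $\mipcfrm$ to reductions in $\msfrm$. Since $\mpgrz$ is already a modal companion of $\mpipc$ by \cref{thm: modal companion}, what remains is to show $\M\subseteq\mpgrz$ for every modal companion $\M$ of $\mpipc$.

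By the fmp of $\mpgrz$, this reduces to showing that each finite $\mpgrz$-frame $\F=(X,R,E)$ is an $\M$-frame. Given such an $\F$, its skeleton $\rho\F$ is a finite $\mpipc$-frame by the equivalence just established, so \cref{thm: auxiliary 2} supplies a descriptive $\M$-frame $\G=(Y,R_\G,E_\G)$ and a reduction $h\colon\rho\G\twoheadrightarrow\rho\F$ in $\mipcfrm$. Since $R$ is a partial order by \cref{thm:dual characterization mgrz:3}, $E_R=\id$ and $\rho\F$ is identified with $\F$ via $x\mapsto\{x\}$. I then set $\tilde h=h\circ\pi_\G\colon Y\to X$, where $\pi_\G\colon Y\to Y/E_{R_\G}$ is the skeleton quotient, and aim to show that $\tilde h$ is a reduction of descriptive $\msfour$-frames. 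Once this is done, $\F$ is a p-morphic image of the $\M$-frame $\G$, hence is itself an $\M$-frame, which concludes the argument.

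Surjectivity, continuity, and the $R$-p-morphism property of $\tilde h$ are routine from the corresponding properties of $h$ and $\pi_\G$. For the forth condition for $E$, if $y_1\, E_\G\, y_2$ then reflexivity of $R_\G$ gives $y_1\, E_{Q_{E_\G}}\, y_2$ and hence $\pi_\G(y_1)\, E_{Q'}\, \pi_\G(y_2)$; since $h$ is a $\mipcfrm$-morphism, $\tilde h(y_1)\, E_{Q_E}\, \tilde h(y_2)$, which by \cref{lem: E=EQE} coincides with $\tilde h(y_1)\, E\, \tilde h(y_2)$ in $\F$.

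The hard part is the back condition for $E$, and this is precisely where the Casari hypothesis becomes indispensable. Given $\tilde h(y_1)\, E\, x$, the $E_Q$-reformulation of \cref{def:mipcfrm-morphisms:item4} recorded in \cref{rem:morph dmsfrm:item1}, applied to $h$ together with \cref{lem: E=EQE}, extracts $z\in Y$ with $z\, E_{Q_{E_\G}}\, y_1$ and $x\, R\, \tilde h(z)$; unpacking $z\, Q_{E_\G}\, y_1$ produces $w'\in Y$ with $z\, R_\G\, w'$ and $w'\, E_\G\, y_1$. I take $y_2=w'$: symmetry of $E_\G$ gives $y_1\, E_\G\, y_2$, the $R$-p-morphism property of $\tilde h$ together with transitivity of $R$ yields $x\, R\, \tilde h(y_2)$, and the already-proved forth direction for $E$ together with symmetry and transitivity of $E$ yields $x\, E\, \tilde h(y_2)$. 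The clean clusters characterization \cref{prop: clean clusters: 2}, transferred to $\F$ via \cref{lem: E=EQE}, then forces $x=\tilde h(y_2)$, completing the back condition. Without clean clusters this last step collapses, matching exactly the obstruction to Esakia's theorem for $\mipc$ described in the introduction.
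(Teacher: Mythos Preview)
Your proposal is correct and follows essentially the same route as the paper's proof: reduce via the fmp of $\mpgrz$ to showing that every finite $\mpgrz$-frame $\F$ is an $\M$-frame, use \cref{thm: auxiliary 2} to obtain a reduction $h\colon\rho\G\twoheadrightarrow\rho\F$ from some descriptive $\M$-frame $\G$, and lift $h$ to a reduction $\tilde h=h\circ\pi_\G\colon\G\twoheadrightarrow\F\cong\sigma\rho\F$ in $\msfrm$, with the $E$-back condition handled exactly as the paper does (unfold $Q_E$ to find the witness, then invoke clean clusters via \cref{prop: clean clusters: 2} and \cref{lem: E=EQE} to force equality). The only cosmetic differences are that the paper packages the lifting step as a separate Claim stated for an arbitrary finite $\mpipc$-frame, and justifies $\rho\F\in\DF_{\mpipc}$ via \cref{thm: auxiliary 2} and \cref{thm: modal companion} rather than via the categorical equivalence.
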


\begin{proof}
Let $\M$ be a modal companion of $\mpipc$.  By \cref{thm: auxiliary 1}, $\mpgrz$ has the fmp. Therefore, to show that $\M \subseteq \mpgrz$, it is enough to show that each finite $\mpgrz$-frame $\G$ is an $\M$-frame. By Theorems \ref{thm: auxiliary 2} and \ref{thm: mgrz properties}\eqref{thm: modal companion}, $\DF_{\mpipc}=\R(\sk[\DF_{\mpgrz}])$. Therefore, since $\sk(\G) \in \sk[\DF_{\mpgrz}] \subseteq \R(\sk[\DF_{\mpgrz}])$, we obtain that $\sk(\G) \in \DF_{\mpipc}$.
Thus, since $\G$ is finite, $\rho\G$ is a finite $\mpipc$-frame.  
Applying \cref{thm: auxiliary 2} to $\M$  
yields a descriptive $\M$-frame $\H$ such that $\rho\G \in \R(\rho\H)$. 

\begin{claim}\label{lem:Casari and almost strong isotone implies isotone}
Let $\F=(X,R,Q)$ be a finite $\mpipc$-frame and $\H=(Y,S,E)$ a descriptive $\msfour$-frame. 
If $\rho\H\twoheadrightarrow\F$, then $\H\twoheadrightarrow\sigma\F$. 
\end{claim}

\begin{proofclaim}
Suppose that 
$f \colon \rho\H \to \F$ is a reduction in $\mipcfrm$. Let $\pi \colon \H \to \rho\H$ be the quotient map. Define $g \colon \H \to \sigma\F$ by $g(y)=f\pi(y)$ for each $y\in Y$. Since $\sigma\F=(X,R,E_Q)$, it is clear that $g$ is a well-defined continuous onto map.
Because $\pi\colon(Y,S)\to(Y',S')$ and $f\colon(Y',S')\to(X,R)$ are p-morphisms, so is $g\colon(Y,S)\to(X,R)$. Thus, it is left to show that $g\colon(Y,E)\to(X,E_Q)$ is a p-morphism. 

First, suppose that $x,y \in Y$ with $x \Erel y$. Then $x \mathrel{Q_E} y$ and $y \mathrel{Q_E} x$ (see \eqref{eq: QE} for the definition of $Q_E$),
so $\pi(x) \mathrel{Q'} \pi(y)$ and $\pi(y) \mathrel{Q'} \pi(x)$ by \cref{def:skeleton:1}. Therefore, $f\pi(x) \Qrel f\pi(y)$ and $f\pi(y) \Qrel f\pi(x)$, yielding that $g(x) \mathrel{E_Q} g(y)$ (see \eqref{eq: ER} for the definition of~$E_Q$). 

Next, suppose that $x\in X$, $y\in Y$, and $g(y) \mathrel{E_Q} x$. Then $f\pi(y) \mathrel{E_Q} x$. Since $f$ is a morphism of descriptive $\mipc$-frames, there is $z \in Y$ such that $\pi(y) \mathrel{E_{Q'}} \pi(z)$ and $x \Rrel f\pi(z)$ (see \cref{rem:morph dmsfrm:item1}). 
The former implies that $\pi(y) \mathrel{Q'} \pi(z)$ and $\pi(z) \mathrel{Q'} \pi(y)$. 
From $\pi(z) \mathrel{Q'} \pi(y)$ it follows that $z \mathrel{Q_E} y$ 
(see \cref{def:skeleton:1}),
so there is $u \in Y$ such that $z \mathrel{S} u$ and $u\Erel y$. 
From $z \mathrel{S} u$ it follows that $f\pi(z) \Rrel f\pi(u)$, which together with $x \Rrel f\pi(z)$ gives that $x \Rrel f\pi(u)$, so $x \Rrel g(u)$. Also, $u \Erel y$ implies that 
$g(u) \mathrel{E_Q} g(y)$
(see the previous paragraph). 
The latter together with $g(y) \mathrel{E_Q} x$ yields that $x \mathrel{E_Q} g(u)$.
Since $\F$ is a finite $\mpipc$-frame, from $x \Rrel g(u)$ and $x \mathrel{E_Q} g(u)$ it follows that  
$x=g(u)$ (see \cref{prop: clean clusters: 2}). Thus, there is $u\in Y$ such that $y \mathrel{E} u$ and $g(u)=x$, and hence $f\colon(Y,E)\to(X,E_Q)$ is a p-morphism, concluding the proof. 
\end{proofclaim}

As an immediate consequence of \cref{lem:Casari and almost strong isotone implies isotone}, we obtain that $\sigma\rho\G \in \R(\H)$. This implies that ${\sigma\rho\G \in \DF_\M}$ because $\DF_\M$ is closed under $\R$. Thus, since $\sigma\rho\G \cong \G$, we conclude that $\G$ is an $\M$-frame. 
\end{proof}

To prove that the modal companions of $\mpipc$ form the interval $[\mpsfour,\mpgrz]$ in the lattice of monadic extensions of $\msfour$, it is left to show that $\mpsfour$ is the least modal companion of $\mpipc$. For this, given a monadic intuitionistic logic $\L$, let 
\[
\tau(\L) = \ms + \{\varphi^t : \L \vdash \varphi \}.
\] 

\begin{proposition} \ \plabel{prop:Kripke complete admit modal comp}
\begin{enumerate}
\item\cite[Prop.~3.9]{BC24b} \label[prop:Kripke complete admit modal comp]{prop:Kripke complete admit modal comp:1} Let $\Gamma$ be a set of formulas in 
$\Lae$. Then 
\[
\tau(\mipc+\Gamma)=\ms+\{\gamma^t : \gamma \in \Gamma\}.
\]
\item\cite[Prop.~3.11]{BC24b} \label[prop:Kripke complete admit modal comp]{prop:Kripke complete admit modal comp:2} If $\L$ is a Kripke complete monadic intuitionistic logic, then $\tau (\L)$ is a modal companion of $\L$.
\end{enumerate}
\end{proposition}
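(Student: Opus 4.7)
The plan is to handle the two parts of the proposition separately, both by standard Gödel-translation arguments adapted to the monadic setting.

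For part \cref{prop:Kripke complete admit modal comp:1}, the inclusion $\ms + \{\gamma^t : \gamma \in \Gamma\} \subseteq \tau(\mipc+\Gamma)$ is immediate from the definitions: each $\gamma \in \Gamma$ is a theorem of $\mipc + \Gamma$, so $\gamma^t$ belongs to $\tau(\mipc + \Gamma)$, and $\ms \subseteq \tau(\mipc + \Gamma)$ by definition. For the reverse inclusion, I would argue by induction on the length of a derivation in $\mipc + \Gamma$ that $\varphi^t \in \ms + \{\gamma^t : \gamma \in \Gamma\}$ whenever $\mipc + \Gamma \vdash \varphi$. The base cases split into two: $\varphi$ is an axiom of $\mipc$, which by \cref{thm: MS4 modal comp of MIPC} gives $\ms \vdash \varphi^t$; or $\varphi \in \Gamma$, in which case $\varphi^t$ is an explicit axiom. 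The inductive steps verify compatibility of the translation with the inference rules: modus ponens uses $(\varphi \to \psi)^t = \Box(\varphi^t \to \psi^t)$ combined with the $\sfour$-axiom $\Box p \to p$; substitution commutes with the translation clauses; and $\forall$-necessitation produces $(\forall \varphi)^t = \Box \forall \varphi^t$ by successive application of $\forall$- and $\Box$-necessitation in $\ms$.

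For part \cref{prop:Kripke complete admit modal comp:2}, the direction $\L \vdash \varphi \Rightarrow \tau(\L) \vdash \varphi^t$ is immediate from the definition of $\tau(\L)$. For the converse, I would invoke Kripke completeness of $\L$: if $\L \not\vdash \varphi$, choose a Kripke $\L$-frame $\F = (X, R, Q)$ refuting $\varphi$ and form the Kripke $\ms$-frame $\G = (X, R, E_Q)$ (the Kripke-level analogue of $\sigma$, which is well defined once topology is dropped, since all four conditions of \cref{def:descriptive ms-frame} are easily checked from the conditions on $\F$). The key technical step is the standard Gödel-translation principle in this setting: for every formula $\psi$ of $\Lae$ and every valuation, $\F \vDash \psi$ iff $\G \vDash \psi^t$, proved by routine induction on $\psi$. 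This immediately yields $\G \not\vDash \varphi^t$; moreover, for every $\psi$ with $\L \vdash \psi$ we have $\F \vDash \psi$, hence $\G \vDash \psi^t$, so $\G$ validates all of $\tau(\L)$. Therefore $\tau(\L) \not\vdash \varphi^t$, as required.

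The main obstacle is the Gödel-translation-equals-validity-on-skeleton principle for the quantifier modalities in part~\cref{prop:Kripke complete admit modal comp:2}. The clause $(\forall \psi)^t = \Box \forall \psi^t$ interlocks the two $\ms$-modalities, and its match with the $\mipc$-semantics of $\forall$ via the decomposition $Q = E_Q \circ R$ must be checked using the left commutativity axiom $\Box \forall p \to \forall \Box p$ of $\ms$; the corresponding clause $(\exists \psi)^t = \exists \psi^t$ is simpler thanks to the $\sfive$ structure on $\exists$, but still depends on the correct interpretation of $E_Q$ as capturing the $\exists$-relation. Once this inductive lemma is available, both inductions go through by a bookkeeping argument.
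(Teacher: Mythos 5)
The paper gives no proof of this proposition: both parts are imported by citation from \cite{BC24b}, so there is no in-source argument to compare against line by line. Your reconstruction is the standard one and matches the cited proofs in structure: part (1) by induction on derivations in $\mipc+\Gamma$, using \cref{thm: MS4 modal comp of MIPC} for the $\mipc$-axioms and checking the rules; part (2) by taking a Kripke $\L$-frame $\F=(X,R,Q)$ refuting $\varphi$, passing to the Kripke $\ms$-frame $(X,R,E_Q)$ (the untopologized $\sigma$), and running the pointwise Gödel-translation lemma. Your verification sketch is sound, including the observation that condition (4) of the $\ms$-frame definition for $(X,R,E_Q)$ follows from $R\subseteq Q$, transitivity of $Q$, and condition (6) of the definition of descriptive $\mipc$-frames.

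Two small corrections, neither fatal to the structure. First, in part (1), substitution does \emph{not} literally commute with the translation: since $p^t=\Box p$, already for $\varphi=p$ the formulas $(\varphi[\psi/p])^t=\psi^t$ and $\varphi^t[\psi^t/p]=\Box\psi^t$ differ. The step goes through only up to provable equivalence, via the standard lemma that $\ms\vdash\chi^t\leftrightarrow\Box\chi^t$ for every $\chi$ (for the clause $(\exists\varphi)^t=\exists\varphi^t$ this is where the interaction between $E$ and $R$, i.e.\ left commutativity, genuinely enters); you should state this lemma rather than assert literal commutation. Second, your closing paragraph misattributes the work in part (2): verifying the $\forall$-clause of the inductive lemma rests on the decomposition $Q=E_Q\circ R$, which follows from $R\subseteq Q$ and condition (6) of the definition of descriptive $\mipc$-frames, not on the left commutativity axiom. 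Left commutativity corresponds semantically to condition (4) of the $\ms$-frame definition, whose role is to make $(X,R,E_Q)$ an $\ms$-frame (hence a frame for $\tau(\L)$ by soundness) and to guarantee that translated formulas define $R$-upsets; once these bookkeeping facts are in place, your argument is complete.
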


\begin{remark}
    We emphasize that the assumption in \cref{prop:Kripke complete admit modal comp:2} that $\L$ is Kripke complete is essential, and that it remains open whether $\tau(\L)$ is a modal companion of an arbitrary monadic intuitionistic logic $\L$ (see \cite[Rem.~3.10]{BC24b}).
\end{remark}

Putting \cref{thm: main,prop:Kripke complete admit modal comp} together yields:

\begin{theorem}
The modal companions of $\mpipc$ form the interval $[\mpsfour,\mpgrz]$ in the lattice of monadic extensions of $\msfour$.
\end{theorem}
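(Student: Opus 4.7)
The plan is to combine \cref{thm: main}---which identifies $\mpgrz$ as the greatest modal companion of $\mpipc$---with \cref{prop:Kripke complete admit modal comp} to identify $\mpsfour$ as the least, and then to fill in the interval by a routine implication-chasing argument using the definition of modal companion.

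First I would identify $\tau(\mpipc)$. Since $\mpipc = \mipc + c$, where $c$ denotes the monadic Casari formula, \cref{prop:Kripke complete admit modal comp:1} yields $\tau(\mpipc) = \ms + c^t = \mpsfour$. By \cref{prop: clean clusters: 1}, $\mpipc$ has the fmp and is in particular Kripke complete, so \cref{prop:Kripke complete admit modal comp:2} gives that $\mpsfour = \tau(\mpipc)$ is a modal companion of $\mpipc$. Next I would argue $\mpsfour$ is the \emph{least} such: if $\M$ is any modal companion of $\mpipc$, then $\mpipc \vdash \varphi$ implies $\M \vdash \varphi^t$, so $\M$ contains $\{\varphi^t : \mpipc \vdash \varphi\}$; being a monadic extension of $\msfour$, $\M$ also contains $\ms$, hence $\M \supseteq \tau(\mpipc) = \mpsfour$.

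To fill the interval, I would take any $\M$ with $\mpsfour \subseteq \M \subseteq \mpgrz$ and verify both directions of the modal-companion equivalence for each formula $\varphi$ of $\Lae$. If $\mpipc \vdash \varphi$, then $\mpsfour \vdash \varphi^t$ (since $\mpsfour$ is a modal companion), so $\M \vdash \varphi^t$. Conversely, if $\M \vdash \varphi^t$, then $\mpgrz \vdash \varphi^t$, and by \cref{thm: modal companion} this forces $\mpipc \vdash \varphi$. Hence every such $\M$ is a modal companion of $\mpipc$. Combined with \cref{thm: main}, which rules out any modal companion strictly above $\mpgrz$, this establishes the interval $[\mpsfour,\mpgrz]$.

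There is no real obstacle remaining at this stage: the substantive work is contained in \cref{thm: main} (for the upper endpoint) and in \cref{prop:Kripke complete admit modal comp} together with the fmp of $\mpipc$ (for the lower endpoint). The present theorem is essentially an assembly step, requiring only care with the two directions of the modal-companion equivalence and with the identification $\tau(\mpipc) = \mpsfour$.
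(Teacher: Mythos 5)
Your proof is correct and follows essentially the same route as the paper's: identify $\mpsfour=\tau(\mpipc)$ via \cref{prop:Kripke complete admit modal comp:1}, combine the fmp of $\mpipc$ (\cref{prop: clean clusters: 1}) with \cref{prop:Kripke complete admit modal comp:2} for the lower endpoint, and invoke \cref{thm: main} for the upper. The only difference is that you spell out the sandwiching argument showing every $\M$ with $\mpsfour\subseteq\M\subseteq\mpgrz$ is itself a modal companion (and that any modal companion contains $\tau(\mpipc)$), steps the paper leaves implicit; this is a useful bit of explicitness but not a different approach.
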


\begin{proof}
    By \cref{prop:Kripke complete admit modal comp:1},  $\mpsfour=\tau(\mpipc)$. Therefore, by Propositions~\ref{prop: clean clusters}\eqref{prop: clean clusters: 1} and~\ref{prop:Kripke complete admit modal comp}\eqref{prop:Kripke complete admit modal comp:2}, $\mpsfour$ is a modal companion of $\mpipc$. Let $\M$ be a modal companion of $\mpipc$. By \cref{thm: main}, $\M\subseteq\mpgrz$. Also, since $\M$ proves the G\"odel translation of Casari's formula, $\mpsfour\subseteq\M$, concluding the proof.
\end{proof}

\begin{remark}
    The algebraic semantics for $\mipc$ is given by monadic Heyting algebras \cite{Bez98}, and that for $\msfour$ by monadic $\sfour$-algebras \cite{BBI23,BC24a}. Using the dual equivalence between the categories of monadic Heyting algebras and descriptive $\mipc$-frames \cite{Bez99} and that between the categories of monadic $\sfour$-algebras and descriptive $\msfour$-frames \cite{BC24a}, the above result can be formulated as follows: A variety of monadic $\sfour$-algebras is the variety corresponding to a modal companion of $\mpipc$ iff it contains the variety of $\mpgrz$-algebras and is contained in the variety of $\mpsfour$-algebras.
\end{remark}

\end{document}